\definecolor{green}{RGB}{0,50,0}
\theoremstyle{remark}
\newtheorem{remark}{Remark}[section]
\theoremstyle{definition}
\newtheorem{theorem}{Theorem}[section]
\newtheorem{definition}[theorem]{Definition}
\newtheorem{proposition}[theorem]{Proposition}
\newtheorem{corollary}[theorem]{Corollary}
\newtheorem{hypothesis}[theorem]{Hypothesis}
\DeclareMathOperator{\R}{\mathbb{R}}
\DeclareMathOperator{\segn}{sign}
\DeclareMathOperator{\C}{\mathcal{C}}
\DeclareMathOperator{\ra}{\rightarrow}
\DeclareMathOperator{\de}{\text{d}}
\newcommand{\f}[1]{{\pmb{ #1}}}%%%%{{\mathbf{ #1}}}
\newcommand{\tet}{\tilde{\theta}}
\newcommand{\et}{\tilde{\eta}}
\newcommand{\tp}{\tilde{\varphi}}
\renewcommand{\t}{\partial_t}
\DeclareMathOperator{\essinf}{ess\,inf}
\DeclareMathOperator{\dive}{div}
\newcommand{\fhi}{\varphi}
\newcommand{\RR}{\mathbb{R}}
\newcommand{\ov}[1]{\overline{ #1}}
\newcommand{\un}[1]{\underline{ #1}}
\newenvironment{giuliorev}{\color{red}}{\color{black}}
\newcommand{\III}{\begin{giuliorev}}
\newcommand{\EEE}{\end{giuliorev}}
\newenvironment{bettirev}{\color{magenta}}{\color{black}}
\newcommand{\BBB}{\begin{bettirev}}
\newcommand{\EE}{\end{bettirev}}
\title{Weak solutions and weak-strong uniqueness for a thermodynamically consistent
phase-field model}
\author{Robert Lasarzik\\
Weierstrass Institute for Applied
Analysis and Stochastics,\\
Mohrenstrasse~39, 10117 Berlin,
Germany\\
E-mail: {\tt robert.lasarzik@wias-berlin.de}\\
\and 
Elisabetta Rocca\\
Dipartimento di Matematica, Universit\`a di Pavia, and IMATI - C.N.R.,\\
Via Ferrata~5, 27100 Pavia, Italy\\
E-mail: {\tt elisabetta.rocca@unipv.it}\\
\and
Giulio Schimperna\\
Dipartimento di Matematica, Universit\`a di Pavia, and IMATI - C.N.R.,\\
Via Ferrata~5, 27100 Pavia, Italy\\
E-mail: {\tt giusch04@unipv.it}
}
\date{}
\begin{document}

%%
%%%%%%%%%%%%%%%%%%%%%%%%%%%%%%%%%%%%%%%%%
%%%%%%%%%%%%%%%%%%%%%%%%%%%%%%%%%%%%%%%%%
\maketitle
%%%%%%%%%%%%%%%%%%%%%%%%%%%%%%%%%%%%%%%%%

\begin{abstract}
In this paper we  prove the existence of weak
solutions for a thermodynamically consistent phase-field model introduced in \cite{fremond} 
in two and three dimensions of space. We use a notion of solution 
inspired by \cite{feireisl}, where the pointwise internal energy balance
is replaced by the total energy inequality complemented with
a weak form of the entropy inequality. Moreover, we prove 
existence of local-in-time strong solutions and, finally, we show weak-strong uniqueness of solutions,
meaning that every weak solution coincides with a local strong solution emanating 
from the same initial data, as long as the latter exists. 
\end{abstract}

\bigskip

\noindent
{\em Keywords:
Existence of weak solutions, weak-strong uniqueness, phase transition, local solutions}
\smallskip

\noindent {\bf AMS (MOS) subject classification:}\,\,
35D30, 35D35, 80A22\,.

%\end{frontmatter}
%%%%%%%%%%%%%%%%%%%%%%%%%%%%%%%%%%%%%%%%%
\tableofcontents
%%%%%%%%%%%%%%%%%%%%%%%%%%%%%%%%%%%%%%%%%
%%%%%%%%%%%%%%%%%%%%%%%%%%%%%%%%%%%%%%%%%

\section{Introduction}
\label{sec:intro}

This paper is concerned with the analysis of the initial boundary-value problem for the following PDE system:
\begin{subequations}\label{eq}
\begin{align}
 \theta_t + \theta \varphi_t -\kappa   \Delta \theta  ={}& | \varphi_t|^2& & \text{in }\Omega\times (0,T)\,, \label{eq1}\\
 \varphi_t - \Delta \varphi + F' ( \varphi) ={}& \theta & & \text{in }\Omega\times (0,T)  \,,\label{eq2}
\end{align}
which describes phase transition phenomena
occurring in a bounded connected container~$\Omega\subset \R^d$, $d\in\{2,3\}$, with sufficiently 
smooth boundary and fixing a reference time interval $[0,T]$.
The state variables are the {\em absolute}\/ temperature $\theta$ and the order parameter 
(or phase-field) $\varphi$ describing the locally attained phase. 
We have denoted by $F$ the interaction potential entering the free energy functional and by $\kappa>0$ the 
heat conductivity, assumed to be constant.
This system is equipped with homogeneous \textsc{Neumann} 
boundary conditions,\textit{ i.e.,}
\begin{align}
\f n \cdot    \nabla \theta = 0 = \f n \cdot \nabla \varphi \qquad \text{on } \partial \Omega \times (0,T)\,\label{boundary}
\end{align}
and initial conditions
\begin{align}
\theta(0) = \theta_0 \,, \quad \varphi(0) = \varphi_0 \qquad \text{in } \Omega \,.\label{initial}
\end{align}
\end{subequations}
System \eqref{eq1}-\eqref{eq2} can be seen as one of the simplest 
diffuse-interface models describing non-isothermal phase transition 
processes in a thermodynamically consistent setting in the case when
no external heat source is present. More precisely, thermodynamic consistency
holds for a wide range of temperature
values and not only in proximity of the equilibrium temperature. 
Nonetheless, the global-in-time well posedness of the model
is still open both in 2 and in 3 space dimensions.

There are various ways to derive equations \eqref{eq1}-\eqref{eq2}
from the laws of Thermodynamicas. We sketch here an approach that 
follows the lines of the so-called \textsc{Fr\'emond} theory of phase 
transitions (\textit{cf}.~\cite[p.~5]{fremond}) for a particular choice of the free-energy
functional and of the pseudo-potential of dissipation. An alternative 
physical derivation is provided in the paper~\cite{BGCJV}. 

We start from the following expression for the volumetric free
energy:
\begin{equation}\label{psi}
\Psi(\theta,\varphi,\nabla\varphi)=c_V\theta(1-\log\theta)-\frac{\lambda}{\theta_c}(\theta-\theta_c)\varphi
+F(\varphi) +\frac{\nu }{2}|\nabla\varphi|^{2},
\end{equation}
where the constants
$c_V,\,\theta_c\,$, and $\nu>0$ represent, respectively, the specific heat,
the equilibrium temperature,
 and the interfacial energy coefficient, while  $\lambda$ stands for the latent
heat of the system. The term
$F(\varphi)+(\nu/2)|\nabla\varphi|^2$  accounts for a mixture or
interaction free-energy. Hereafter, for simplicity, we shall set
 $c_V=\nu=\lambda/\theta_c=1$ and we incorporate the term $\theta_c\varphi$
into $F(\varphi)$. 
Indeed, since in the following the potential $F$ will
be assumed to be a $\lambda$-convex function,
we may suppose without loss of generality that $\theta_c\varphi$
contributes to the non-convex part.
A typical example of a potential $F$ that we can include in our analysis is the so-called
``regular double-well potential'' $F(r)=(r^2-1)^2$.
Dissipation effects are described by means of a
pseudo-potential of dissipation $\Phi$ depending on the dissipative
variables $\nabla\theta$ and $\varphi_t$:
\begin{align}\label{fi}
\Phi (\nabla\theta, \varphi_t)=\,&
\,{\frac{1}{2}}{|\varphi_t|^2}+\frac{h(\theta)|\nabla\theta|^2}{2\theta}\,,
\end{align}
where  $h$ stands for a positive function representing the heat conductivity
of the process and, for the sake of simplicity, the other physical parameters have been
set equal to $1$.

The evolution of the phase variable $\varphi$
 is ruled by
an equation derived from a
generalization of the principle of virtual power
(\textit{cf}.~\cite[Sec.~2]{fremond}): 
\begin{equation}\label{mombal}
  B-\dive {\mathbf{H}}=0
\end{equation}
in case the volume
amount of mechanical energy provided to the domain by the external
actions (which do not involve macroscopic motions) is zero.
Here ${B}$ (a density or energy function) and
$\mathbf{H}$ (an energy flux vector) represent the internal
microscopic forces  responsible for the mechanically induced heat
sources:
\begin{align}
B=\frac{\partial \Psi }{
\partial\varphi}+\frac{\partial \Phi }{\partial\varphi_t}=-\theta
+F'(\varphi)+\varphi_t\,, \quad 
\label{constiH} \mathbf{H}=\frac{\partial
\Psi }{\partial \nabla\varphi}=\nabla\varphi.
\end{align}
With trivial computations, from \eqref{mombal}--\eqref{constiH} we
derive exactly \eqref{eq2}. Moreover, if  the surface amount of mechanical energy provided by
the external local surface actions (not involving macroscopic
motions) is zero as well, then the natural  boundary condition for
this equation of motion is exactly the second one in \eqref{boundary}. 

Finally, the energy balance equation reads  
\begin{equation}\label{enbal} 
  e_t+ \dive {\bf q}=B\varphi_t+{\bf
   H}\cdot\nabla\varphi_t\,,
\end{equation}
where $e$, the (specific) internal energy, is linked to the
 free energy $\Psi$ by the standard \textsc{Helmholtz} relation
\begin{equation}
\label{helmotz}
 e=\Psi+\theta s, \quad
s=-\frac{\partial \Psi}{\partial\theta},
\end{equation}
in which we have denoted by $s$ the specific entropy of the system. Following
\textsc{Fr\'emond}'s perspective (\textit{cf}.~\cite{fremond}), on the
right-hand side of~\eqref{enbal} there appears
the mechanically induced heat sources, related to
microscopic stresses, while the heat flux ${\bf q}$ is defined by the
following constitutive relation (\textit{cf}.~\eqref{fi}):
\begin{equation} \label{constiq}
{\bf q}=-\theta\frac{\partial\Phi}{\partial\nabla\theta}=-h(\theta)\nabla\theta.
\end{equation}
Using the no-flux boundary condition and the \textsc{Fourier} heat flux law (\textit{i.e.}~setting $h(\theta)\equiv \kappa$), we get the first boundary condition in \eqref{boundary} and equation \eqref{eq1}. 

This model turns out to be thermodynamically consistent in the sense that it complies with  the Second
Principle of Thermodynamics: indeed, the
\textsc{Clausius--Duhem} inequality
\begin{equation}\label{clausius-ineq} 
  s_t+ \dive \left(\frac{{\bf  q}}{\theta}\right)\geq 0
\end{equation}
holds true. To check
\eqref{clausius-ineq}, it is sufficient to note that
 the internal energy balance \eqref{enbal}
can be expressed in terms of the entropy $s$  in this way:
\begin{equation}\label{enteq}
  \theta\left(s_t+ \dive \left(\frac{{\bf q}}{\theta}\right)\right)
=|\varphi_t|^2 
-\frac{{\bf q}}{\theta}\cdot\nabla\theta = | \varphi_t|^2 + h(\theta ) \frac{| \nabla \theta|^2}{\theta},
\end{equation}
where we used~\eqref{constiq} and  the formal identity $e_t - B\varphi_t -  \mathbf{H} \cdot \nabla \varphi_t = \theta s_t- | \varphi_t|^2$, 
which follows from~\eqref{constiH} and~\eqref{helmotz}. 
Therefore, \eqref{clausius-ineq} ensues from the
positivity of $\theta$, a fact that we shall prove in the sequel.

%\begin{equation}\label{enteq}
%%  \theta\left(s_t+ \dive \left(\frac{{\bf q}}{\theta}\right)\right)
%%=B^{\text{d}}\varphi_t
%%-\frac{{\bf q}}{\theta}\cdot\nabla\theta,
%%\end{equation}
%with $B^{\text{d}}=\frac{\partial \Phi }{\partial\varphi_t}$.
% Note that the right-hand side of
%\eqref{enteq} turns out to be non-ne\-ga\-ti\-ve be\-cau\-se
%$$(B^{\text{d}},\,-{\bf q}/\theta)\in \partial
%\Phi(\varphi_t,\,\nabla\theta),$$
%and $\Phi$ is convex in all
%of its variables with $\Phi(0,0)=0$. Therefore, \eqref{clausius-ineq} ensues from the
%positivity of $\theta$, a fact that we shall prove in the sequel.

Coming to our results, in this paper we shall first prove existence of weak solutions
$(\theta,\varphi)$. These will comply with equation \eqref{eq2} satisfied almost everywhere in the space-time 
domain $\Omega\times (0,T)$ and supplemented with homogeneous \textsc{Neumann} boundary conditions and initial conditions.
Moreover, weak solutions will satisfy the {\em total energy inequality}:
\begin{equation}\label{eneq-I}
E(t)\leq E(0), \quad\hbox{for a.e.~} t\in (0,T), \quad\hbox{where }\ E\equiv \int_\Omega\left(\theta+F(\varphi)
+\frac{1}{2}|\nabla\varphi|^2\right)\,\de\f x\,
\end{equation}
together with the following {\em entropy  inequality}:
\begin{align}\nonumber
%  & - \int_{\Omega} \vartheta  ( \log  \theta + \varphi) \de \f x  \Big|_0^t 
  & - \int_{\Omega} \vartheta(t)  ( \log  \theta(t) + \varphi(t) ) \de \f x  
   + \int_{\Omega} \vartheta(0)  ( \log  \theta_0 + \varphi_0) \de \f x  
   + \int_0^t \int_{\Omega} \vartheta  
       \left ( \kappa | \nabla  \log \theta|^2+ \frac{| \varphi_t|^2}{\theta }  \right ) \de \f x  \de t\\
 \label{clausius-I}
  & \mbox{}~~~~~
   \leq \int_0^t \int_\Omega\left (  \kappa   \nabla \log \theta \cdot \nabla \vartheta 
       - \vartheta _t ( \log \theta + \varphi )\right ) \de \f x  \de t \,,
\end{align}
for a.e.~$t\in(0,T)$ and for every 
sufficiently regular nonnegative function $\vartheta$. 
%
%$\vartheta \in C^0([0,T]\times \overline\Omega) 
%\cap H^1(0,T; L^2(\Omega)) \cap L^2 (0,T; H^1(\Omega))$ such that
%$\vartheta(t, \f x)\ge 0$ for every $t\in[0,T]$ and $\f x \in \overline\Omega$.
%
Inequality \eqref{clausius-I} implies that the entropy is controlled by the
dissipation of the system. 
It is worth noticing that the weak formulation corresponds to the underlying physical laws, the two Thermodynamic Principles, 
\textit{i.e.,} energy conservation and entropy production in the case of~\eqref{eneq-I} and~\eqref{clausius-I}, respectively. 

From the mathematical viewpoint, initial-boundary value problems 
for equations \eqref{eq1}-\eqref{eq2} or variations of them have
been addressed in a number of 
apers. Starting from the pioneering work~\cite{bfl}, there is
 a comprehensive literature on the  models of phase
change with microscopic movements proposed by \textsc{Fr\'emond} (we may refer
to the PhD thesis~\cite{tesi-ulisse} and the references therein).

However, system \eqref{eq} needs to be carefully
handled, mainly because of the the presence of the terms $\varphi_t \,
\theta$ and $|\varphi_t|^2$ in \eqref{eq1}. Due to the difficulties arising from these two
higher order nonlinearities, there has not been any global-in-time
well-posedness result for the initial-boundary value problem related
to system \eqref{eq} {in}
the two or three-dimensional {case}. A global existence result for {system~\eqref{eq}}
has {only} been proved in the one-dimensional
setting in~\cite{ls1,ls2}, while in \cite{existence} (\textit{cf}.~also \cite{fpr-errata}) a global-in-time
well-posedness result has been obtained in the 2 and 3D cases only for power-like type growth of the 
heat flux law ($h(\theta)\sim \theta^\eta$, for $\eta\geq 3$ for $\theta$ large). 

Other approaches to phase transition models based on various forms of the {\sl entropy balance}
{and} possibly including mechanical effects are {available in the literature}.
A possibility (\textit{cf}.~\cite{bcf, bf, bfr, br}) 
consists in coupling an {\em entropy equation} (instead of the standard
internal energy balance equation \eqref{eq1}) with a microscopic
motion equation. Within this approach the resulting PDE system
couples an equation for $\varphi$ of the type \eqref{eq2} with an
entropy balance equation, which can be written as $s_t+ \dive 
\left(\frac{{\bf q}}{\theta}\right)=R$ and which is obtained
rewriting the internal energy balance in terms of $s$ by means of
the standard \textsc{Helmholtz} relation \eqref{helmotz} 
and assuming the right-hand side $R$, which now
has the meaning of an {\em entropy source}, to be known.
Note that in~\cite{bcfgnew} the entropy source is allowed to depend (somehow
singularly) on $\theta$. A second approach has been used in
\cite{roro1, roro2, roro3, roroweak}: the main novelty of these contributions lies in 
the fact that the equations for $\theta$ and $\varphi$ (analogous to our \eqref{eq1} and \eqref{eq2}) are coupled to a hyperbolic stress-strain
relation for the displacement variable $\mathbf{u}$. 
In the 3D case in \cite{roro1} a {\em local-in-time} well-posedness
result is obtained, whereas in \cite{roroweak} {\sl entropic solutions}\/ have been proved
to exist, but only in the case of a power-like type heat flux law ($h(\theta)\sim \theta^\kappa$, with $\kappa>1$ for $\theta$ large).
Finally, in \cite{roro2} the {\em global}
existence and the {\em long-time behavior of solutions} are
investigated in the 1D case. 

Since weak solutions of system \eqref{eq} are not known to be unique, and the source of non-uniqueness stems from
insufficient regularity properties holding in the setting of weak solutions,
a natural concept generalizing uniqueness is the so-called weak-strong uniqueness. 
It is fulfilled 
whenever every weak solution coincides with a local strong solution emanating 
from the same initial data, as long as the latter exists. In this way, the property also 
guarantees that the generalized solution concept is indeed a generalization of strong solutions. 

There are prominent examples in the context of fluid dynamics for these kinds of results, 
such as \textsc{Serrin}'s uniqueness 
result~\cite{serrin} for \textsc{Leray}'s weak solutions~\cite{leray} to the
incompressible \textsc{Navier}--\textsc{Stokes} equation in three space dimensions, 
or the weak-strong uniqueness for suitable weak-solutions to the 
incompressible \textsc{Navier}--\textsc{Stokes} system~\cite{Feireislrelative} 
or to the full \textsc{Navier--Stokes--Fourier} system~\cite{novotny}. 

A recurrent tool to prove such a weak-strong uniqueness result is the formulation of a relative 
energy. For  thermodynamical systems this idea goes back to \textsc{Dafermos}~\cite{dafermos}.
In the context of fluid dynamics, the relative energy approach has also been used to show the 
stability of a stationary solution~\cite{feireislstab}, 
the convergence to a singular limit~\cite{fei}, or to derive \textit{a posteriori} estimates
for simplified models~\cite{fischer}.

In the article at hand, this approach is adapted to a $\lambda$-convex energy functional. 
Actually, there are very few articles dealing with the relative 
energy approach for nonconvex energies, and all of them seem to pertain to
the context of liquid crystals. Actually, the paper~\cite{hyper} deals with a $Q$-tensor model 
equipped with a $\lambda$-convex energy and the authors are able to show weak-strong 
uniqueness for dissipative solutions. 
The weak-strong uniqueness for weak solutions to the penalized \textsc{Ericksen--Leslie} model 
in three space dimensions has been proved in~\cite{weakstrongweak}. 
In~\cite{weakstrong}, weak-strong uniqueness of weak solutions 
(measure-valued~\cite{masswertig} and dissipative~\cite{diss}) has been shown for the \textsc{Ericksen--Leslie}
model equipped with the \textsc{Oseen--Frank} energy, which is an energy with nonconvex leading order term.

Our proofs of existence and of weak-strong uniqueness for system~\eqref{eq1}-\eqref{eq2} combine
the use of more or less established methods in the mathematical theory of phase 
transition models with two new ideas, which constitute the key points of our 
argument:
\begin{itemize}
\item[-] a proper notion of weak solutions to \eqref{eq}, which is 
based on a new \textit{a-priori} estimate holding for
{\sl polynomial}\/ potentials $F$: actually,
the estimates following from the total energy inequality \eqref{eneq-I} and \eqref{clausius-I}
are not sufficient in order to pass to the limit in a suitable regularized problem;
\item[-] a concept of relative energy which combines the natural contribution of the ``physical''
energy with an additional $L^1$-term. Indeed, the latter is crucial in order to overcome the nonconvex
character of the physical energy functional and to obtain an effective estimate.
\end{itemize}

The plan of the paper is as follows: in the next Section~\ref{sec:main} we give our 
precise assumptions on data and state our main results. The remainder of the 
paper is devoted to proofs: in Section~\ref{sec:weak}, we show our global existence
result for weak solutions; in Section~\ref{sec:ws}, we deal with weak-strong uniqueness;
finally, in Section~\ref{sec:local}, we prove local-in-time existence for strong
solutions.

%%%%%%%%%%%%%%%%%%%%%%%%%%%%%%%%%%%%%%%%%
%%%%%%%%%%%%%%%%%%%%%%%%%%%%%%%%%%%%%%%%%

\section{Assumptions and main results}
\label{sec:main}

We start this section by presenting our basic hypotheses on the nonlinear 
function $F$. These assumptions are collected, together with a number 
of notable consequences of them,
in the following statement. It is worth noting that, despite the
length of what follows, it is very easy to check that
the usual double-well potentials of polynomial growth
(as the commonly used quartic potential $F(r)=(r^2-1)^2$) satisfy 
all the assumptions (A)-(D) listed below.
\begin{hypothesis}
\label{hypo}
 (A)~~We let $ F \in \C^2(\R, \R) \cap \C^{2,1}_{\text{loc}}( \R,\R)$. 

\smallskip
\noindent%
(B)~~We assume $F$ to be {\it $\lambda$-convex}, \textit{i.e.}, convex up to a quadratic 
perturbation. Namely there exists a constant $\lambda>0$ such that 
$ F''(y) \geq - \lambda$ for all $ y \in \R$.
We can then define a convex modification of $F$, subsequently named $G$, as 
\begin{align}
  G(y ) = F( y) + \lambda y^2 \, \quad y \in \R\label{lambdacon}.
\end{align}
By construction, $G$ is ``strongly convex'', \textit{i.e.}, $G''(y)\ge \lambda >0$
for all $ y \in \R$. Moreover, it is not restrictive to assume $G$ to 
be nonnegative and so normalized that $G'(0)=0$.

\smallskip
\noindent%
(C)~~Next, we assume a minimal coercivity assumption at $\infty$,
namely 
\begin{equation}\label{Fcoerc}
  \liminf_{|y|\ra \infty} F'(y) \segn y > 0.
\end{equation} 
As a consequence of \eqref{Fcoerc}, we can first observe that 
$F(y) \geq -c $ for  some constant $c>0$ and every $y\in \R$.
Moreover, it is easy to verify that
the physical energy controls the $H^1$-norm of $\fhi$ {from above}. Namely, 
there exist $\gamma > 0$ and $c\ge 0$ such that
\begin{equation}\label{Ecoerc}
  \frac12 \| \nabla \fhi \|_{L^2(\Omega)}^2
   + \int_\Omega F(\fhi) \, \de \f x \ge \gamma \| \fhi \|_{H^1(\Omega)} - c,
\end{equation} 
for every $\fhi \in H^1(\Omega)$ such that $F(\fhi) \in L^1(\Omega)$.

\smallskip
\noindent%
(D)~~Finally, a growth condition is assumed to hold, \textit{i.e.}, 
there exists a constant $c>0$ such that 
\begin{equation}\label{growthF}
  | F' (y) |\log (e + | F'(y) |) \leq c( 1 + |F(y)| ) 
   \quad\text{for all }y \in \R.
\end{equation} 
Possibly modifying the value of $c$ one can see that the
analogue of \eqref{growthF} holds also for the convex 
modification~$G$, \textit{i.e.}, we have
\begin{equation}\label{growthG}
  | G' (y) |\log (e + | G'(y) |) \leq c( 1 + G(y) ) 
   \quad\text{for all }y \in \R.
\end{equation} 
To check that \eqref{growthF} implies \eqref{growthG},
a number of straightforward but somehow technical
computations would be required. We leave them to the reader 
because no real difficulty is involved. 
\end{hypothesis}

We can now define weak solutions to our system in a rigorous way:
\begin{definition}\label{def:weak}
A couple $ (\theta, \varphi) $ is called a weak solution to~\eqref{eq}
over the time interval $(0,T)$ if the following conditions are satisfied. First, 
there hold the regularity properties
\begin{subequations}
\begin{align}\label{rego:t1}
  \theta & \in L^\infty( 0,T; L^1(\Omega)) 
   \quad \text{with } \theta(\f x ,t) > 0 \text{ a.e.~in }\Omega \times (0,T) \,, \\ 
 \label{rego:t1b}
  \theta \log \theta & \in L^1(\Omega \times (0,T)) \,,\\
 \label{rego:t2}
  \log \theta & \in L^\infty(0,T;L^1(\Omega)) \cap L^2(0,T; H^1(\Omega)),\\ 
 \label{rego:t2b}
  \t \log \theta & \in (\mathcal{M}(\ov\Omega \times [0,T] ) + L^2(0,T;(W^{1,2}(\Omega)^*)) \,,\\
 \label{rego:fhi}
  \varphi & \in L^\infty (0,T; H^1(\Omega))\cap  W^{1,1}(0,T;L^1(\Omega)) \,,\\
 \label{rego:deltafhi}
  \Delta \varphi & \in L^1(0,T;L^1(\Omega)) \,,\\
 \label{rego:Ffhi}
  F(\varphi) & \in L^\infty(0,T; L^1(\Omega))  \,, \\
 \label{rego:tf}
  \theta^{-1/2}\t \varphi  & \in L^2(0,T; L^2(\Omega)) \,.
\end{align}
\end{subequations}
Next, the entropy inequality holds in the integral form
\begin{align}\nonumber
%  & - \int_{\Omega} \vartheta  ( \log  \theta + \varphi) \de \f x  \Big|_0^t 
  &  - \int_{\Omega} \vartheta(t)  ( \log  \theta(t) + \varphi(t) ) \de \f x  
   + \int_{\Omega} \vartheta(0)  ( \log  \theta_0 + \varphi_0) \de \f x\\
 \nonumber
    & \mbox{}~~~~~~~~~~~~~~~
   + \int_0^t \int_{\Omega} \vartheta  
       \left ( \kappa | \nabla  \log \theta|^2 + | \theta^{-1/2}\t \varphi|^2 \right ) \de \f x  \de t\\
 \label{entropy}
  & \mbox{}~~~~~
   \leq \int_0^t \int_\Omega\left (  \kappa   \nabla \log \theta \cdot \nabla \vartheta 
       -\t \vartheta  ( \log \theta + \varphi )\right ) \de \f x  \de t \,,
\end{align}
for a.e.~$t\in(0,T)$ and for every $\vartheta \in C^0([0,T]\times \overline\Omega) 
\cap H^1(0,T; L^2(\Omega)) \cap L^2 (0,T; H^1(\Omega))$ such that
$\vartheta(t, \f x)\ge 0$ for every $t\in[0,T]$ and $\f x \in \overline\Omega$.
Moreover, the phase field equation 
\begin{align}
  \t \varphi - \Delta \varphi + F'(\varphi ) = \theta   
    \label{phaseeq}
\end{align}
holds a.e.~in $\Omega \times (0,T)$ with the initial and boundary
conditions
\begin{equation}\label{ibfhi}
  \varphi(0) = \varphi_0, \qquad
   \f n \cdot \nabla \varphi = 0
\end{equation} 
in the sense of traces respectively in $\Omega$ and on $\partial \Omega \times (0,T)$.
Finally, we require validity of the energy inequality for a.e.~$t\in (0,T)$:
\begin{align}
  \int_{\Omega } \left (\frac12 | \nabla \varphi(t)|^2 + F( \varphi(t)) + \theta(t) \right )\de \f x  
    \leq \int_{\Omega } \left ( \frac12 | \nabla \varphi_0|^2 + F( \varphi_0) + \theta_0\right ) \de \f x 
  \,.\label{energyin}
\end{align}
\end{definition}
\begin{remark}
Note that the traces are well-defined a.e.~in $\Omega$ and on $\partial \Omega \times (0,T)$, respectively. 
Indeed, from~\eqref{rego:fhi} we observe that $\varphi\in \C_{\text{w}}([0,T];H^1(\Omega))$. Moreover, 
 the normal-trace operator 
 is well-defined as a mapping from $W^{2,1}(\Omega)$ to $L^1(\partial \Omega)$, see for instance~\cite[Thm.~2.7.4, (2.7.10) with (2.7.4)]{brezzi},~\cite[Prop.~3.80]{demengel}, or~\cite{normaltrace}. 
\end{remark}
The first result of this paper is devoted to proving the global in time existence of weak solutions
in the sense of Definition~\ref{def:weak}.
As noted in the introduction, this seems to be the first rigorous existence result 
for system \eqref{eq1}-\eqref{eq2} in absence of regularizing power-like terms in the heat equation (\textit{cf}.~\cite{fpr-errata}).
\begin{theorem}\label{thm:exweak}
 Let $\Omega$ be sufficiently smooth and let Hypothesis~\ref{hypo} be fulfilled. Let us also assume
 \begin{subequations}
 \begin{align}\label{in:teta}
  \theta_0 & \in L^1(\Omega), \quad
   \theta_0 > 0~~\text{a.e.~in~$\Omega$}, \quad
   \log\theta_0 \in L^1(\Omega)\,,\\
  \label{in:chi}
   \fhi_0 & \in H^1(\Omega), \quad F(\fhi_0)\in L^1(\Omega), \quad 
   \varphi_0 \log \theta_0 \in L^1(\Omega)\,,\\
  \label{in:chi2}
  \fhi_0(\f x) & \ge - K > - \infty \quad\text{for some }
   K \ge 0~~\text{and a.e.}\ \f x \in \Omega\,. 
 \end{align} 
 \end{subequations}
 Then, there exists at least one weak solution in the sense of Definition~\ref{def:weak}.
 Moreover, if we have in addition
 \begin{equation}\label{tetapos}
   \essinf_{\f x \in \Omega} \theta_0(\f x) > 0\,,
 \end{equation}
 then there follows the minimum principle property
 \begin{equation}\label{posifo}
   \essinf_{\Omega\times(0,T)} \theta(\f x ,t) > 0\,,
 \end{equation}
 providing also the additional regularity
 \begin{equation}\label{extra-log}
   \log \theta \in \text{BV}(0,T;(W^{1,p}(\Omega))^*)\cap L^\infty(0,T;L^q(\Omega)) \quad\text{for }p>3 \text{ and }q\in (1,\infty)\,,
 \end{equation}
 so that in particular the entropy inequality~\eqref{entropy} holds for every (and not just a.e.)~$t\in[0,T]$. 
\end{theorem}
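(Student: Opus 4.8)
### Proof Strategy for Theorem~\ref{thm:exweak}

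\textbf{Plan of the argument.} The plan is to construct weak solutions through a multi-level approximation scheme and then pass to the limit using the a-priori estimates furnished by the energy and entropy inequalities together with the ``new'' polynomial-type estimate mentioned in the introduction. Concretely, I would regularize \eqref{eq} on three levels: (i) a Galerkin truncation (or a time-discretization) to produce smooth approximate solutions; (ii) a viscous/elliptic regularization that makes $\log\theta$ an admissible test function (e.g.\ replacing $\theta_t$ by $\theta_t - \varepsilon\Delta(\log\theta)$ or adding $\varepsilon|\theta_t|^2$-type terms), so that $\theta$ stays strictly positive at the approximate level; (iii) if needed, a truncation $F'_n$ of $F'$ (bounded, Lipschitz, still $\lambda$-convex) so that \eqref{eq2} is well-posed for each $n$ and $F_n$ satisfies \eqref{growthF} uniformly. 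On each level, existence follows from standard ODE/parabolic theory plus Schauder or Leray–Schauder fixed point.

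\textbf{A-priori estimates.} The heart of the proof is a set of estimates that are uniform in the regularization parameters. Testing \eqref{eq2} (approximate version) by $\varphi_t$ and \eqref{eq1} by $1$, and summing, yields formally the total energy balance, giving \eqref{energyin} and hence, via assumption (C) and \eqref{Ecoerc}, bounds for $\varphi$ in $L^\infty(0,T;H^1(\Omega))$, for $F(\varphi)$ in $L^\infty(0,T;L^1(\Omega))$ and for $\theta$ in $L^\infty(0,T;L^1(\Omega))$ (here positivity of $\theta$, to be established at the approximate level, is essential to control $\int_\Omega\theta$ from below). Testing the heat equation by $-\theta^{-1}$ (i.e.\ working with $-\log\theta$) produces the entropy inequality \eqref{entropy}, which delivers the crucial dissipation bounds: $\log\theta \in L^2(0,T;H^1(\Omega))$, $\theta^{-1/2}\varphi_t \in L^2(0,T;L^2(\Omega))$, and control of $\vartheta\kappa|\nabla\log\theta|^2$; one must be careful to choose the test function $\vartheta$ appropriately (e.g.\ $\vartheta\equiv1$ for the basic bound, more general $\vartheta$ for the final inequality) and to handle the boundary/regularity of $\vartheta$. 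At this point the energy/entropy estimates alone are \emph{not} enough to pass to the limit in $F'(\varphi)$ and in the quadratic source $|\varphi_t|^2$; this is where the new estimate comes in. For polynomial $F$ one tests \eqref{eq2} by $F'(\varphi)$ (or by a suitable power of it), using $\lambda$-convexity via $G$ and the growth condition \eqref{growthF}--\eqref{growthG}, to obtain a bound for $F'(\varphi)$, hence for $\Delta\varphi$, in $L^1(0,T;L^1(\Omega))$, together with better integrability of $\varphi_t$; combined with the $\theta^{-1/2}\varphi_t$ bound and an Orlicz-type interpolation (using the $\log$-improvement in \eqref{growthF}), this should yield $\varphi_t \in L^1(\Omega\times(0,T))$ and, more importantly, equi-integrability of $|\varphi_t|^2$ in a suitable sense, which is exactly what is needed to identify the limit of the quadratic heat source. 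From the equation \eqref{eq1} read as $\theta_t = |\varphi_t|^2 - \theta\varphi_t + \kappa\Delta\theta$ one then reads off the bound \eqref{rego:t2b} for $\partial_t\log\theta$ in $\mathcal{M} + L^2(W^{1,2})^*$.

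\textbf{Passage to the limit and main obstacle.} With the above bounds, Aubin–Lions–Simon compactness gives $\varphi_n\to\varphi$ strongly in, say, $C^0([0,T];L^2(\Omega))$ and a.e., $\nabla\varphi_n\rightharpoonup\nabla\varphi$ weakly-$*$, $\log\theta_n\rightharpoonup\log\theta$ weakly in $L^2(H^1)$ and (via the time-derivative bound) a.e.\ up to subsequences, hence $\theta_n\to\theta$ a.e.\ with $\theta>0$ a.e.; $F'(\varphi_n)\to F'(\varphi)$ a.e., and the $L^1\log L^1$-type bound ensures weak-$L^1$ convergence (Dunford–Pettis) of $F'(\varphi_n)$ and of $\varphi_{n,t}$. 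The energy inequality \eqref{energyin} passes to the limit by weak lower semicontinuity; the entropy inequality \eqref{entropy} passes by lower semicontinuity of the (convex) dissipation terms on the left and continuity of the linear terms on the right — one must check carefully that the chosen class of test functions $\vartheta$ is preserved in the limit and that products like $\vartheta_n(\log\theta_n+\varphi_n)$ converge, which uses the strong convergence of $\vartheta$ and equi-integrability of $\log\theta_n$ (from $\theta_n\log\theta_n\in L^1$, itself obtained from the energy bound $\theta_n\in L^\infty(L^1)$ plus the entropy bound $\log\theta_n\in L^\infty(L^1)$ and positivity). The phase equation \eqref{phaseeq} then holds a.e. I expect the \emph{main obstacle} to be precisely the limit passage in the nonlinear heat source $|\varphi_{n,t}|^2$ and in the coupling term $\theta_n\varphi_{n,t}$: weak $L^2$-convergence of $\varphi_{n,t}$ would not suffice, and the remedy is to not pass to the limit in \eqref{eq1} pointwise at all but rather to keep \eqref{eq1} only in the weak/entropic form \eqref{entropy} plus the total energy inequality \eqref{energyin} — this is the whole point of the Feireisl-type solution concept — so that the problematic nonlinearities appear only inside convex functionals (where lower semicontinuity is our friend) or are eliminated by the energy balance. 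Establishing the new polynomial a-priori estimate rigorously at the regularized level (making $F'(\varphi)$ an admissible test function uniformly) is the other technically delicate point.

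\textbf{The minimum principle and extra regularity.} Finally, under \eqref{tetapos} one derives \eqref{posifo} by a De Giorgi–Stampacchia (or Moser) iteration, or more simply by a comparison argument, applied to the approximate heat equation: writing $\vartheta_\ast = \essinf_\Omega\theta_0>0$, one tests the equation for $(\vartheta_\ast e^{-Ct}-\theta)^+$ (with $C$ large enough to dominate the $\theta\varphi_t$ term, using $\varphi_{n,t}\in L^2$ locally — here one may need to first localize, exploiting that on short time intervals the relevant norms are controlled) and shows this quantity vanishes; the sign-definite source $|\varphi_t|^2\ge0$ helps. This lower bound survives the limit. Once $\theta\ge c>0$, $\log\theta$ becomes Lipschitz-comparable to $\theta$, so \eqref{rego:t2b} upgrades: $\partial_t\log\theta = \theta^{-1}\theta_t$ and from \eqref{eq1} one gets $\theta_t = |\varphi_t|^2-\theta\varphi_t+\kappa\Delta\theta$ bounded in $\mathcal{M}(\overline\Omega\times[0,T]) + L^2((W^{1,p})^*)$, and the elliptic regularity plus the $L^1\log L^1$ source give $\log\theta\in L^\infty(0,T;L^q(\Omega))$ for all finite $q$, yielding \eqref{extra-log}; since $\log\theta$ is then of bounded variation in time with values in a space in which point evaluation makes sense, the map $t\mapsto\int_\Omega\vartheta(t)(\log\theta(t)+\varphi(t))$ is well-defined for every $t$, so \eqref{entropy} holds for all $t\in[0,T]$, not merely a.e.
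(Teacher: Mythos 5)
Your global plan (regularize, derive energy/entropy estimates, prove a supplementary integrability bound, pass to the limit via Dunford--Pettis/Vitali, keep the heat equation only in energy/entropy form so the quadratic terms sit inside lower semicontinuous functionals) is the right skeleton, and you correctly identify that one should not try to pass to the limit in $|\varphi_t|^2$ pointwise. However, the route you propose for the \emph{key new estimate} is not the one that works, and you never use the hypothesis \eqref{in:chi2}, which is in fact essential.

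The paper's new estimate is $\iint_{\Omega\times(0,T)} \theta\log\theta\,\de\f x\,\de t \le c$, and it is \emph{not} obtained by testing \eqref{eq2} by $F'(\varphi)$: that only involves the phase equation and cannot deliver integrability of $\theta$ beyond the $L^\infty(L^1)$ coming from energy conservation. What the paper does instead is: (a) derive, from the lower bound $\fhi_0\ge -K$ of \eqref{in:chi2} and a subsolution comparison for \eqref{sys:phase} (which uses $\theta\ge 0$), a uniform space-time lower bound $\varphi_\varepsilon \ge -K'$; (b) divide the regularized heat equation by $\theta_\varepsilon$ to obtain the pointwise entropy production identity, and test it by $\varphi_\varepsilon + K'\ge 0$; (c) test \eqref{sys:phase} by $\log\theta_\varepsilon$; (d) add the two. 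The good term $\iint\theta_\varepsilon\log\theta_\varepsilon$ emerges on one side, while the nonnegative dissipation terms multiplied by $\varphi_\varepsilon + K'\ge 0$ can simply be dropped --- this is precisely where \eqref{in:chi2} enters. The remaining cross terms, including $\iint F'(\varphi_\varepsilon)\log\theta_\varepsilon$, are controlled by Legendre--Fenchel--Young for the pair $\psi(r)=e^r$, $\psi^*(s)=s\log s - s$, which is exactly where the polynomial growth/log condition \eqref{growthF} is used. Without this structure the argument collapses, and your proposal, as written, leaves the central estimate unproved.

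Two further points. First, the regularization in the paper is the zeroth-order term $\varepsilon\theta^p$ added to \eqref{eq1} (with $p>3$ fixed), not an elliptic regularization of $\log\theta$ or a truncation of $F'$; this choice is convenient because it provides maximal $L^p$-regularity in the Schauder fixed point argument \emph{and} yields positivity of $\theta_\varepsilon$ by a simple ODE subsolution $\t h + h^p + \tfrac12 h^2 = 0$, $h(0) = \essinf\theta_0^\varepsilon$, the latter being uniform in $\varepsilon$ whenever \eqref{tetapos} holds. Second, the improved integrability of $\varphi_t$ is not ``$\varphi_t\in L^1$ plus equi-integrability of $|\varphi_t|^2$''; equi-integrability of $|\varphi_t|^2$ is out of reach and not needed. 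The paper derives $\iint |\t\fhi_\varepsilon|\log^{1/2}(1+|\t\fhi_\varepsilon|)<\infty$ by another Legendre--Fenchel argument combining the dissipation bound $\theta^{-1/2}\t\fhi\in L^2$ with the Orlicz bound $\theta\log\theta\in L^1$; this, via Dunford--Pettis, gives the weak $L^1$ compactness of $\t\fhi_\varepsilon$ that you need.
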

Observe that \eqref{extra-log} yields that there exists an at most countable
set $D\subset[0,T]$, such that $\log\theta\in C^0([0,T] \setminus D;(W^{1,p}(\Omega))^*)$.

The second result of this paper states the local-in-time existence of strong solutions. 
Here and below we note as $H^2_{\f n}(\Omega)$ the space of the functions in $H^2(\Omega)$
having zero normal derivative on the boundary. 
We also need an additional assumption on $F$:
\begin{hypothesis} \label{hypo2}
  Let $F$ be three times continuously differentiable, \textit{i.e.,} $ F \in \C^3(\R;\R)$. 
\end{hypothesis}
\begin{theorem} \label{thm:local}
 Let Hypotheses~\ref{hypo} and \ref{hypo2} hold true. Moreover, let us assume the following 
 additional conditions on the initial data:
 \begin{subequations}
 \begin{align}\label{teta0}
   & \theta_0 \in H^1(\Omega), \quad \essinf_{\f x \in \Omega} \theta_0(\f x) > 0\,,\\
  \label{chi0}
   & \varphi_0\in H^2_{\f n}(\Omega), \qquad \varphi_1:= \Delta \varphi_0 - F'(\varphi_0) + \theta_0 \in H^1(\Omega)
 \end{align} 
 \end{subequations}
 (notice that the latter is equivalent to just assuming $\varphi_0\in H^3(\Omega)\cap H^2_{\f n}(\Omega)$). 
 Then, there exists a local-in-time solution to the system~\eqref{eq}, 
 namely, a couple $(\theta,\varphi)$ satisfying the 
 regularity properties
 \begin{subequations}
  \begin{align}\label{regotetas}
    & \theta \in \C^0([0,T^*);H^1(\Omega)) \cap L^2(0,T^*;H^2(\Omega)) \cap W^{1,2}(0,T^*;L^2(\Omega))\,,\\
  \label{regochis}
    & \varphi \in \C^0([0,T^*);H^2(\Omega))  \cap C^1([0,T^*);H^1(\Omega)) \cap W^{1,2}(0,T^*;H^2(\Omega))\,, 
 \end{align}
 \end{subequations}
 for a certain $T^*>0$. Moreover,  the analogue of \eqref{posifo} holds on the 
 interval $[0,T^*)$ 
 and the strong solution fulfills a supplementary initial condition in the sense 
 that $\| \varphi_t(t)-\varphi_1\|_{H^1}\to 0$ as $ t\searrow 0$. 
 Finally, the strong solution satisfies the energy equality \eqref{energyin} with equality sign for any $t\in [0,T^*)$.
\end{theorem}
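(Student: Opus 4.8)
\textbf{Plan of proof for Theorem~\ref{thm:local}.}
The strategy is a classical fixed-point/Galerkin scheme combined with higher-order energy estimates, exploiting that under~\eqref{teta0}--\eqref{chi0} the initial data are regular enough to close estimates in the spaces~\eqref{regotetas}--\eqref{regochis}. First I would set up a suitable linearization: given $\bar\varphi$ in a ball of (say) $C^0([0,\tau];H^2_{\f n}(\Omega))\cap C^1([0,\tau];H^1(\Omega))$ with $\bar\varphi(0)=\varphi_0$, solve the linear parabolic equation $\theta_t -\kappa\Delta\theta = |\bar\varphi_t|^2 - \theta\,\bar\varphi_t$ with Neumann boundary condition and $\theta(0)=\theta_0$; then plug the resulting $\theta$ into~\eqref{phaseeq} and solve the semilinear equation $\varphi_t-\Delta\varphi+F'(\varphi)=\theta$ for a new $\varphi$. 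Maximal parabolic regularity (or Galerkin plus the usual $H^1$/$H^2$ testing) gives $\theta\in C^0([0,\tau];H^1)\cap L^2(0,\tau;H^2)\cap H^1(0,\tau;L^2)$ provided $\bar\varphi_t\in L^4(0,\tau;L^\infty)$-type control is available, which on a short interval follows from the $H^2$-bound on $\bar\varphi$ in dimension $d\le 3$. The map $\bar\varphi\mapsto\varphi$ is then shown to be a contraction on a sufficiently short time interval $[0,T^*)$, or alternatively one uses a Schauder-type compactness argument; either way one gets a local solution with the stated regularity, and $\varphi_1=\Delta\varphi_0-F'(\varphi_0)+\theta_0\in H^1$ is exactly what is needed to start the estimate for $\varphi_t$ in $C^0([0,T^*);H^1)$, giving the supplementary initial condition $\|\varphi_t(t)-\varphi_1\|_{H^1}\to 0$.

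The core of the work is the a~priori estimates that make this scheme close and that must be uniform on a time interval whose length depends only on norms of the data. I would proceed hierarchically. (i) The basic energy estimate~\eqref{energyin}, which at the strong-solution level is an equality obtained by testing~\eqref{eq1} by $1$ and~\eqref{eq2} by $\varphi_t$ and adding, controls $\theta$ in $L^\infty(0,T^*;L^1)$, $\varphi_t$ in $L^2(\Omega\times(0,T^*))$, and $\nabla\varphi$ in $L^\infty(0,T^*;L^2)$; combined with the entropy/positivity argument below it upgrades to the stated spaces. (ii) Differentiate~\eqref{eq2} in time, or equivalently test it by $-\Delta\varphi_t$, to get $\varphi\in L^\infty(0,T^*;H^2)$ and $\varphi_t\in L^2(0,T^*;H^2)$; here Hypothesis~\ref{hypo2} ($F\in C^3$) is used to control $F''(\varphi)\nabla\varphi$ and $F'''$-terms in $L^2$ via Sobolev embeddings in $d\le 3$, and the $\lambda$-convexity keeps the lower-order terms under control. (iii) For the temperature, test~\eqref{eq1} by $-\Delta\theta$ and by $\theta_t$; the bad terms are $\int_\Omega \theta\varphi_t\,\Delta\theta$ and $\int_\Omega|\varphi_t|^2\,\Delta\theta$, which are absorbed using $\varphi_t\in L^\infty(0,T^*;H^1)\hookrightarrow L^6$ and $\theta\in L^\infty(0,T^*;H^1)$ together with a Gronwall argument and the smallness of $T^*$. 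The positivity~\eqref{posifo} on $[0,T^*)$ follows from a maximum principle for~\eqref{eq1}: writing $\underline m(t)=\essinf\theta(t)$, the source $|\varphi_t|^2\ge 0$ and the linear term $-\theta\varphi_t$ give a differential inequality $\underline m'\ge -\|\varphi_t\|_{L^\infty}\underline m$, hence $\underline m(t)\ge \underline m(0)e^{-\int_0^t\|\varphi_t\|_{L^\infty}}>0$ since $\essinf\theta_0>0$ by~\eqref{teta0} and $\varphi_t\in L^2(0,T^*;H^2)\hookrightarrow L^2(0,T^*;L^\infty)$ in $d\le 3$; once positivity holds, $\log\theta$ inherits the regularity needed for the entropy inequality and one recovers~\eqref{eq1} in the pointwise form.

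The final claim, namely that the strong solution satisfies~\eqref{energyin} \emph{with equality} on $[0,T^*)$, is then automatic: given the regularity in~\eqref{regotetas}--\eqref{regochis}, the functions $t\mapsto\int_\Omega\theta(t)$, $t\mapsto\int_\Omega F(\varphi(t))$, and $t\mapsto\tfrac12\|\nabla\varphi(t)\|_{L^2}^2$ are absolutely continuous, the test functions $1$ in~\eqref{eq1} and $\varphi_t$ in~\eqref{eq2} are admissible, all integrations by parts are justified (the Neumann conditions kill the boundary terms), and adding the two identities yields $\tfrac{\de}{\de t}E(t)=0$ for a.e.~$t$, hence $E(t)=E(0)$ for every $t\in[0,T^*)$ by continuity.

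\textbf{Main obstacle.} The delicate point is closing the temperature estimates: the quadratic source $|\varphi_t|^2$ and the transport-like term $\theta\varphi_t$ in~\eqref{eq1} are genuinely supercritical for the natural energy, and one must trade the $H^2$-regularity of $\varphi$ (bought in step (ii) at the price of Hypothesis~\ref{hypo2}) against the time length $T^*$, arranging the Gronwall constants so that the radius of the fixed-point ball is preserved. Getting the dependencies right — and in particular ensuring that the lower bound on $\theta$ does not degenerate before $T^*$ — is where the real care is needed; the rest of the argument is a fairly standard, if lengthy, bootstrap.
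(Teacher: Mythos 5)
Your plan matches the paper's proof in all essentials: the key a~priori estimate is obtained by differentiating~\eqref{eq2} in time and testing by $-\Delta\varphi_t+\varphi_t$, testing~\eqref{eq1} by $\theta_t$, and controlling the nonlinear terms (including the $F'''$ contribution, where Hypothesis~\ref{hypo2} enters) via Sobolev embeddings in $d\le 3$, which leads to a differential inequality of the form $\xi'\le c(1+Q(\xi))$ that is closed by an ODE comparison argument on a short time interval. The only minor deviations are that the paper tests~\eqref{eq1} only by $\theta_t$ and recovers $\theta\in L^2(0,T^*;H^2)$ a~posteriori by elliptic comparison in~\eqref{eq1} (rather than also testing by $-\Delta\theta$), and it invokes the already-built regularized system~\eqref{sys:reg} to justify the formal manipulations rather than setting up a separate fixed-point scheme; neither difference affects the substance.
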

Actually, the regularity proved above for strong solution is not sufficient
for our purposes. However, it is easy to prove additional results. For instance 
we have the following
\begin{corollary} \label{thm:local2}
 Let the assumptions of Theorem~\ref{thm:local} hold and let, in addition, 
 \begin{equation}\label{teta1}
    \theta_0 \in H^2_{\f n}(\Omega).
 \end{equation} 
 Then, the local strong solution provided by Theorem~\ref{thm:local} satisfies the 
 additional regularity condition
 \begin{equation}\label{regotetas2}
    \theta \in \C^0([0,T^*);H^2(\Omega)) \cap W^{1,2}(0,T^*;H^1(\Omega)).
 \end{equation} 
\end{corollary}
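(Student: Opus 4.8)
The plan is to derive the extra spatial regularity for $\theta$ by a bootstrap argument that builds on the estimates already available for the local strong solution of Theorem~\ref{thm:local}, adding the information coming from~\eqref{teta1}. First I would notice that, by Theorem~\ref{thm:local} and the fact that $\theta\in \C^0([0,T^*);H^1(\Omega))$, we already control $\theta$ in $L^\infty(0,T^*;H^1(\Omega))$, hence (in dimension $d\le 3$) in $L^\infty(0,T^*;L^6(\Omega))$, while $\varphi_t\in \C^0([0,T^*);H^1(\Omega))\hookrightarrow L^\infty(0,T^*;L^6(\Omega))$ as well. Rewriting~\eqref{eq1} as a linear parabolic equation for $\theta$,
\begin{equation}\label{eq:bootstrap}
  \theta_t - \kappa\Delta\theta = |\varphi_t|^2 - \theta\,\varphi_t \quad\text{in }\Omega\times(0,T^*),
\end{equation}
the right-hand side lies in $L^\infty(0,T^*;L^3(\Omega))\cap L^2(0,T^*;L^2(\Omega))$: indeed $|\varphi_t|^2\in L^\infty(0,T^*;L^3(\Omega))$ and, by H\"older with the $L^6$--$L^6$ split, $\theta\varphi_t\in L^\infty(0,T^*;L^3(\Omega))$; moreover $\theta\varphi_t\in L^2(0,T^*;L^2(\Omega))$ because $\theta\in L^\infty(0,T^*;L^6(\Omega))$ and $\varphi_t\in L^2(0,T^*;H^2(\Omega))\hookrightarrow L^2(0,T^*;L^\infty(\Omega))$. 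Testing~\eqref{eq:bootstrap} by $-\Delta\theta$ and using the compatibility condition $\theta_0\in H^2_{\f n}(\Omega)$ to control the initial value of $\|\nabla\theta\|_{L^2}$ together with the Neumann boundary condition, I would obtain a Gronwall-type estimate giving $\theta\in L^\infty(0,T^*;H^1(\Omega))\cap L^2(0,T^*;H^2(\Omega))\cap W^{1,2}(0,T^*;L^2(\Omega))$ — but this is already known, so the point is to go one derivative higher.

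The actual step is therefore to differentiate, or rather to apply elliptic regularity in a sharper way. I would test the equation obtained by applying $\nabla$ to~\eqref{eq:bootstrap} — equivalently, test~\eqref{eq:bootstrap} with $\Delta^2\theta$ after a suitable regularization, or work with difference quotients — against $-\Delta\theta_t$, or more simply estimate $\|\Delta\theta\|_{H^1}$ directly from~\eqref{eq:bootstrap} viewed as $-\kappa\Delta\theta = (|\varphi_t|^2 - \theta\varphi_t) - \theta_t$. By elliptic regularity with Neumann conditions, $\|\theta\|_{H^3}\le c(1+\|{-}\Delta\theta\|_{H^1})$, so it suffices to show the right-hand side of~\eqref{eq:bootstrap} minus $\theta_t$ belongs to $L^2(0,T^*;H^1(\Omega))$. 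For this I need $\theta_t\in L^2(0,T^*;H^1(\Omega))$, which is exactly~\eqref{regotetas2}, so the argument must be run as a closed energy estimate rather than a plain bootstrap: I would test~\eqref{eq:bootstrap} differentiated in space (or its difference quotients) by $\nabla\theta_t$, integrate by parts using $\f n\cdot\nabla\theta_t=0$ on $\partial\Omega$ (which follows by differentiating the Neumann condition in time, legitimate since $\theta_0\in H^2_{\f n}(\Omega)$ provides the matching trace at $t=0$), and control the resulting terms
\begin{equation}\label{eq:rhsterms}
  \int_\Omega \nabla\big(|\varphi_t|^2\big)\cdot\nabla\theta_t\,\de\f x, \qquad
  -\int_\Omega \nabla(\theta\varphi_t)\cdot\nabla\theta_t\,\de\f x
\end{equation}
using $\varphi\in W^{1,2}(0,T^*;H^2(\Omega))$, $\varphi_t\in L^\infty(0,T^*;H^1)\cap L^2(0,T^*;H^2)$, $\theta\in L^\infty(0,T^*;H^1)$ together with the already-established $\theta\in L^2(0,T^*;H^2)$ and $\theta_t\in L^2(0,T^*;L^2)$, plus Gagliardo--Nirenberg interpolation in $d\le 3$. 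This yields an estimate of the form $\frac{\de}{\de t}\|\nabla\theta\|_{L^2}^2 + c\|\nabla\theta_t\|_{L^2}^2 \le (\text{integrable in }t)\cdot\|\nabla\theta\|_{L^2}^2 + (\text{integrable in }t)$, which by Gronwall closes on $[0,T^*)$.

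I expect the main obstacle to be the term $-\int_\Omega \nabla(\theta\varphi_t)\cdot\nabla\theta_t$ in~\eqref{eq:rhsterms}: expanding it produces $\int_\Omega \varphi_t\,\nabla\theta\cdot\nabla\theta_t$ and $\int_\Omega \theta\,\nabla\varphi_t\cdot\nabla\theta_t$. The first is handled by putting $\varphi_t$ in $L^\infty$ via $\varphi_t\in L^2(0,T^*;H^2)\hookrightarrow L^2(0,T^*;L^\infty)$, $\nabla\theta$ in $L^\infty(0,T^*;L^2)$... no, that does not pair with $\nabla\theta_t\in L^2(L^2)$ unless $\nabla\theta\in L^\infty(L^2)$ which we have — but then $\varphi_t$ must be in $L^2(L^\infty)$, which it is, giving an $L^1$-in-time coefficient; absorbing a small fraction of $\|\nabla\theta_t\|_{L^2}^2$. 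The genuinely delicate one is $\int_\Omega \theta\,\nabla\varphi_t\cdot\nabla\theta_t$: here $\theta\in L^\infty(0,T^*;L^6)$, $\nabla\varphi_t\in L^2(0,T^*;L^6)$ (from $\varphi_t\in L^2(0,T^*;H^2)$), hence $\theta\nabla\varphi_t\in L^2(0,T^*;L^3)$, which does not pair with $\nabla\theta_t\in L^2(0,T^*;L^2)$. One must instead interpolate $\nabla\theta_t$, or better integrate by parts once more moving a derivative off $\theta_t$ onto $\theta\nabla\varphi_t$ and using $\theta_t\in L^2(0,T^*;L^2)$ against $\di(\theta\nabla\varphi_t)$, whose worst piece is $\theta\,\Delta\varphi_t\in L^\infty(L^6)\cdot L^2(L^2)= L^2(L^{3/2})$ — still not in $L^2(L^2)$. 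The resolution is to exploit that we only need $H^3$ regularity a.e.\ in time with an $L^\infty_t H^2\cap L^2_t H^3$-type bound is not actually claimed — only $\C^0([0,T^*);H^2(\Omega))\cap W^{1,2}(0,T^*;H^1(\Omega))$ — so it is enough to get $\theta_t\in L^2(0,T^*;H^1)$ and then $\theta\in L^\infty(0,T^*;H^2)$ follows from~\eqref{eq:bootstrap} and elliptic $H^2$ regularity with right-hand side in $L^\infty(0,T^*;L^2)$ checked directly. Thus I would organize the proof so that the single energy estimate targets $\nabla\theta_t\in L^2(L^2)$, and for that estimate the troublesome product is bounded by writing $\theta\nabla\varphi_t$ via Gagliardo--Nirenberg as $\|\theta\|_{L^\infty}\|\nabla\varphi_t\|_{L^2}$ with $\|\theta\|_{L^\infty}\le c\|\theta\|_{H^2}^{1/2}\|\theta\|_{H^1}^{1/2}$ (valid for $d\le 3$), so that $\|\theta\nabla\varphi_t\|_{L^2}\in L^2(0,T^*)$ precisely because $\theta\in L^2(0,T^*;H^2)$, $\theta\in L^\infty(0,T^*;H^1)$, and $\nabla\varphi_t\in L^\infty(0,T^*;L^2)$; then absorb half of $\|\nabla\theta_t\|_{L^2}^2$ and close by Gronwall. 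The compatibility condition~\eqref{teta1} enters only to make the boundary terms vanish and to give a finite initial value $\|\nabla\theta_0\|_{L^2}$ — or rather to make $\theta_1:=\kappa\Delta\theta_0+|\varphi_1|^2-\theta_0\varphi_1$ lie in a space compatible with $\theta_t(0)$, ensuring the estimate is not vacuous at $t=0$.
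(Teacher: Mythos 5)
Your proposal is correct and takes essentially the same route as the paper: rewrite \eqref{eq1} as a linear heat equation for $\theta$ with right-hand side $(\varphi_t)^2-\theta\varphi_t\in L^2(0,T^*;H^1(\Omega))$ (verified from \eqref{regotetas}--\eqref{regochis} exactly as in your Agmon/Gagliardo--Nirenberg bounds), then test by $-\Delta\theta_t$ (equivalently, the spatially differentiated equation against $\nabla\theta_t$) using $\theta_0\in H^2_{\f n}(\Omega)$ and conclude by Gronwall and parabolic theory. The paper's proof is just a two-line version of this argument, without spelling out the product estimates that occupy most of your discussion.
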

\begin{proof}
It is sufficient to observe that, 
thanks to~\eqref{regotetas}-\eqref{regochis},
\eqref{eq1} can be rewritten as
\begin{equation}\label{eq1b}
  \theta_t - \kappa \Delta \theta  = (\t \fhi)^2 - \theta \t \fhi \in L^2(0,T^*;H^1(\Omega)).
\end{equation} 
Hence, the assertion follows from the standard theory of linear parabolic equations 
by also using the regularity \eqref{teta1} of the initial datum (the a-priori estimate
used here corresponds to testing \eqref{eq1b} by $-\Delta\theta_t$ and using the regularity of 
the right-hand side resulting from \eqref{regotetas}--\eqref{regochis}).
\end{proof}
\begin{remark}\label{rem:continuous}
Via  standard bootstrap arguments, one can even deduce more regularity for the local-strong solution, \textit{e.g.}~that all 
terms appearing in~\eqref{eq} are even continuous in the case that $ \theta_0,\varphi_0 \in \C^2(\ov\Omega)$. 
Indeed, from $\varphi$, $\theta \in \C^0([0,T^*); H^2(\Omega))$ we infer by the \textsc{Sobolev}-embedding theorem 
that $\varphi$, $\theta \in \C^0(\ov \Omega \times [ 0,T^*))$. 
The continuity of the function $F'$ implies that 
\begin{align*}
\t \varphi - \Delta \varphi = \theta - F'(\varphi) \in C^0(\ov \Omega \times [0,T^*))\,,
\end{align*}
which grants together with the   regularity of the heat equation (see~\cite[Prop.~1.1A]{taylor}) that there exists 
some $0<\tilde{T}<T^*$ such that $\varphi \in \C^1([0,\tilde{T}];\C^0(\ov\Omega))\cap \C^0([0,\tilde{T}];\C^2(\ov\Omega))$, \textit{i.e.},
$\t \varphi$, $\Delta \varphi\in \C^0(\ov\Omega\times [0,\tilde{T}])$. 
This in turn implies that the right-hand side of~\eqref{eq1b} is even in~$C^0(\ov \Omega \times [0, \tilde{T}])$ such that, we may infer 
the regularity $\theta \in \C^1([0,\tilde{T}];\C^0(\ov\Omega))\cap \C^0([0,\tilde{T}];\C^2(\ov\Omega))$,
(see~\cite[Prop.~1.1A]{taylor}), \textit{i.e.} $\t \theta$, $\Delta \theta\in C^0(\ov \Omega \times [0,\tilde{T}])$. 
This regularity is then sufficient to give a sense to all terms appearing in the weak-strong uniqueness proof (see Sec.~\ref{sec:ws}) 
and especially to bound the right-hand side of the inequality~\eqref{estimate}. 
It is worth noticing that this regularity is exactly what is needed to fulfill the system~\eqref{eq} in a classical sense, \textit{i.e.}, pointwise.
\end{remark}

\smallskip

We can finally state our second main result providing weak-strong uniqueness for system
\eqref{eq1}-\eqref{eq2}. We may note that the proof of this result neither
requires Hypothesis~\ref{hypo2}, nor any additional assumption on $F$ and on the initial data
leading to the validity of \eqref{eq1}-\eqref{eq2} in $L^\infty$ according
to the above remark. On the other hand, such additional conditions are needed
in the construction of local strong solutions with the desired properties
(see Theorem~\ref{thm:local} above). 
\begin{theorem}\label{thm:main}
 Let Hypothesis~\ref{hypo} hold true. Let $( \theta , \varphi)$ be a weak solution according 
 to Definition~\ref{def:weak} and $( \tet ,\tp)$ a strong solution according to
 Theorem~\ref{thm:local} and satisfying\/ \eqref{eq1}--\eqref{eq2} in
 the $\C^0$-sense.
 Let us also assume that $( \theta , \varphi)$ and $( \tet ,\tp)$ 
 start from the same initial data assumed to fulfill\/ \eqref{teta0}-\eqref{chi0} and \eqref{teta1}
 and that they are defined over the same reference interval $(0,T)$.
 Then the two solutions coincide,\textit{ i.e.}, $ \theta \equiv \tet $ and $\varphi \equiv \tp $
 a.e.~in~$\Omega\times(0,T)$.
\end{theorem}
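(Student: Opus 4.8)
The plan is to prove weak-strong uniqueness via a relative energy (relative entropy) argument adapted to the $\lambda$-convex setting, following the Dafermos-type philosophy announced in the introduction. First I would introduce the relative energy functional
\[
\mathcal{R}(\theta,\varphi\,|\,\tet,\tp)
 = \int_\Omega \Big( \tfrac12 |\nabla(\varphi-\tp)|^2 + G(\varphi) - G(\tp) - G'(\tp)(\varphi-\tp) \Big)\,\de\f x
 + \int_\Omega \big( \theta - \tet - \tet\,\log(\theta/\tet) \big)\,\de\f x
 + \| \varphi - \tp \|_{L^1(\Omega)},
\]
where $G$ is the convex modification of $F$ from \eqref{lambdacon}; the Bregman-type term for $G$ is nonnegative by strong convexity, the term $\theta-\tet-\tet\log(\theta/\tet)$ is nonnegative by the positivity of $\theta,\tet$, and the extra $L^1$-term is the additional contribution flagged in the introduction that is meant to absorb the non-convex remainder coming from $-\lambda\varphi^2$ and cross-terms. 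The goal is a differential inequality of the form $\frac{\de}{\de t}\mathcal{R} \le C\,\mathcal{R} + (\text{terms that vanish})$, so that $\mathcal{R}(0)=0$ and Gronwall force $\mathcal{R}\equiv 0$, hence $\varphi\equiv\tp$ and $\theta\equiv\tet$.

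The key steps, in order: (1) assemble the energy side — subtract the total energy inequality \eqref{energyin} for the weak solution and the total energy equality for the strong solution (valid by Theorem~\ref{thm:local}), which controls the difference of $\int_\Omega(\tfrac12|\nabla\varphi|^2 + F(\varphi) + \theta)$; the $\lambda$-convexity is handled by rewriting $F=G-\lambda y^2$ and moving the quadratic piece into the Gronwall-absorbed remainder. (2) Assemble the entropy side — test the weak entropy inequality \eqref{entropy} with the admissible nonnegative test function $\vartheta=\tet$ (which is legitimate because the strong solution has the continuity and positivity established in Theorem~\ref{thm:local} and Remark~\ref{rem:continuous}, and by \eqref{posifo} it is bounded below), and subtract the corresponding entropy \emph{equality} satisfied by the strong solution; this produces the relative-entropy dissipation terms $\int_0^t\int_\Omega \tet\big(\kappa|\nabla\log(\theta/\tet)|^2 + |\theta^{-1/2}\t\varphi - \tet^{-1/2}\t\tp|^2\big)$ up to controllable remainders, together with the cross-term $-\int_\Omega \tet(\log\theta - \log\tet + \varphi - \tp)$ evaluated at time $t$. (3) Handle the phase equation — since \eqref{phaseeq} holds a.e.\ for the weak solution and \eqref{eq2} holds in $\C^0$ for the strong solution, subtract them, test the difference against $\varphi-\tp$ in $H^1$ (legitimate by the regularity \eqref{rego:fhi}, \eqref{rego:deltafhi}) and also, for the $L^1$-part of $\mathcal{R}$, against $\mathrm{sign}(\varphi-\tp)$ — the latter being the point where the new $L^1$-term enters and where the monotonicity of $G'$ gives a good sign while the $-2\lambda(\varphi-\tp)$ and $\theta-\tet$ contributions are estimated. (4) Combine (1)–(3): the ``physical'' dissipation terms and the relative-entropy dissipation terms should reconstruct, after using the elementary inequality $(\theta-\tet)(\log\tet-\log\theta)\le 0$ and Young/convexity estimates, a nonnegative quantity that can be dropped, while all remaining terms are bounded by $C(t)\,\mathcal{R}(t) + \int_0^t C(s)\mathcal{R}(s)\,\de s$ using the $L^\infty$-bounds on $\tet,\tp,\nabla\tp,\t\tp$ from the strong solution, Hölder, and the Sobolev embeddings in $d\le 3$. (5) Apply Gronwall.

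The main obstacle I expect is the control of the coupling term $\theta\,\t\varphi$ (equivalently $\varphi_t\theta$ in \eqref{eq1}) and its relative version: in the relative-entropy dissipation one naturally obtains $\int_\Omega\tet|\theta^{-1/2}\t\varphi - \tet^{-1/2}\t\tp|^2$, but matching this against what comes out of the energy difference and the phase-equation testing requires rewriting the mismatched quadratic term $\theta|\theta^{-1/2}\t\varphi|^2 = |\t\varphi|^2$ and the cross-terms $\t\varphi\,\theta$ vs.\ $\t\tp\,\tet$ carefully — this is precisely the higher-order nonlinearity that obstructs global well-posedness, and the trick is that on the strong solution $\t\tp$ and $\tet$ are bounded, so the dangerous products involving the weak solution always appear paired with a factor $(\varphi-\tp)$ or $(\theta-\tet)$ or are controlled by the good dissipation term $\theta^{-1/2}\t\varphi\in L^2$. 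A secondary delicate point is justifying that $\vartheta=\tet$ is an admissible test function in \eqref{entropy} and that the time-integration-by-parts on the $\int_\Omega\tet(\log\theta+\varphi)$ term is rigorous given only $\t\log\theta \in \mathcal{M} + L^2((W^{1,2})^*)$; here one uses \eqref{extra-log} from Theorem~\ref{thm:exweak}, the countable exceptional set $D$, and approximation of $\tet$ by smooth functions, taking care that the measure part of $\t\log\theta$ pairs with the continuous function $\tet$.
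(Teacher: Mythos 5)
Your overall plan (Dafermos-type relative energy built from the $G$-Bregman divergence, the thermal term $\Lambda(\theta|\tet)=\theta-\tet-\tet\log(\theta/\tet)$, testing the entropy inequality with $\vartheta=\tet$, testing the difference of the phase equations against $\segn(\varphi-\tp)$ for the $L^1$ contribution, and a final Gronwall) matches the paper's strategy, and your justification that $\vartheta=\tet$ is admissible is also the right one. But there is a genuine gap in the precise form of the $L^1$-term, and it breaks the Gronwall closure.

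You propose to include $\|\varphi-\tp\|_{L^1(\Omega)}$ in the relative energy, whereas the paper uses $M\|\varphi-\tp\|_{L^1(\Omega)}^2$ with $M$ large. The square is not cosmetic. When you test the difference of the phase equations with $\segn(\varphi-\tp)$, after using monotonicity of $G'$ and Kato's inequality for the Laplacian you get
\[
\frac{\de}{\de t}\|\varphi-\tp\|_{L^1(\Omega)}\le 2\lambda\|\varphi-\tp\|_{L^1(\Omega)}+\int_\Omega|\theta-\tet|\,\de\f x
\le 2\lambda\|\varphi-\tp\|_{L^1(\Omega)}+\int_\Omega\Lambda(\theta|\tet)\,\de\f x+\int_\Omega\tet|\log\theta-\log\tet|\,\de\f x\,.
\]
Near $\theta=\tet$ one has $\Lambda(\theta|\tet)\sim|\theta-\tet|^2/\tet$, so the last integral is genuinely of order $\bigl(\int_\Omega\Lambda(\theta|\tet)\,\de\f x\bigr)^{1/2}$ and cannot be bounded linearly by the relative energy. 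With your unsquared $L^1$-term this yields a differential inequality of the schematic form $\dot{\mathcal R}\le C\mathcal R+C\sqrt{\mathcal R}$, which does \emph{not} force $\mathcal R\equiv0$ from $\mathcal R(0)=0$ (the comparison ODE has the nontrivial solution $(Ct/2)^2$). The paper's fix is to square: multiply the inequality above by $\|\varphi-\tp\|_{L^1(\Omega)}$ so that the offending term becomes $\|\varphi-\tp\|_{L^1(\Omega)}\cdot\bigl(\int_\Omega\Lambda\bigr)^{1/2}$, and then apply Young's inequality to split it into $\|\varphi-\tp\|_{L^1(\Omega)}^2+\int_\Omega\Lambda$, both of which are controlled by $\mathcal E$. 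This is exactly the content of the paper's first Proposition in the weak-strong section and the reason the $L^1$-term must enter quadratically.

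The large constant $M$ in $M\|\varphi-\tp\|_{L^1}^2$ plays a second role that your sketch also skips: the paper actually keeps the $-\lambda\|\varphi-\tp\|_{L^2}^2$ piece \emph{explicitly inside} $\mathcal E$ (so that the functional is equivalently written with the $F$-Bregman that arises naturally from the energy and phase-equation tests), and then uses Gagliardo--Nirenberg in the form $\|\phi\|_{L^2}\le c(\|\nabla\phi\|_{L^2}^\alpha\|\phi\|_{L^1}^{1-\alpha}+\|\phi\|_{L^1})$ together with Young to show $\frac12\|\nabla(\varphi-\tp)\|_{L^2}^2+M\|\varphi-\tp\|_{L^1}^2-\lambda\|\varphi-\tp\|_{L^2}^2\ge\frac14\|\nabla(\varphi-\tp)\|_{L^2}^2+\|\varphi-\tp\|_{L^1}^2$. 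This gives both nonnegativity and the bound $\|\varphi-\tp\|_{L^2}^2\le c\,\mathcal E$ needed to absorb the quadratic remainders from the nonconvex part of $F$. Your version avoids the nonnegativity issue because the $G$-Bregman already dominates $\frac{\lambda}{2}\|\varphi-\tp\|_{L^2}^2$, but the price is that you never cancel the $G$-to-$F$ conversion term in the derivation, and in any case the Gronwall obstruction above remains. (Also, a small inaccuracy: the correct dissipation term coming from the entropy side is $\int_\Omega\bigl|\sqrt{\tet/\theta}\,\varphi_t-\sqrt{\theta/\tet}\,\tp_t\bigr|^2$, not $\int_\Omega\tet|\theta^{-1/2}\varphi_t-\tet^{-1/2}\tp_t|^2$.)
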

It is worth observing that, in view of \eqref{teta1}, the strong solution additionally
satisfies \eqref{regotetas2}; moreover, thanks to the last condition in \eqref{teta0},
both the weak and the strong solution comply with~\eqref{posifo}.
\begin{remark}
The two main theorems of this article, \textit{i.e.,} Theorem~\ref{thm:exweak} and Theorem~\ref{thm:main}, 
are not restricted to the space dimensions two and three. Actually, the proofs of these results do not use 
any dimension-depending inequality. Only Theorem~\ref{thm:local} makes explicit use of the restriction onto at most three space dimensions, 
since several dimension-dependent embeddings are crucial. Nevertheless, we expect that the local existence might be 
generalized to higher space dimensions up to technical variants in the proof.
\end{remark}

\smallskip

The rest of the paper is devoted to the proof of the above results,
along the scheme outlined at the end of the introduction.

%%%%%%%%%%%%%%%%%%%%%%%%%%%%%%%%%%%%%%%%%%%%%%%%%%%%%%%%%%%%%%%%%%%%%%%%%%%%%%%%%%
%%%%%%%%%%%%%%%%%%%%%%%%%%%%%%%%%%%%%%%%%%%%%%%%%%%%%%%%%%%%%%%%%%%%%%%%%%%%%%%%%%

\section{Existence of weak solutions}
\label{sec:weak}

In this section, we prove the existence of weak solutions according 
to Definition~\ref{def:weak}.

%%%%%%%%%%%%%%%%%%%%%%%%%%%%%%%%%%%%%%%%%
%%%%%%%%%%%%%%%%%%%%%%%%%%%%%%%%%%%%%%%%%

\subsection{Regularized system}

As a regularized system to approximate~\eqref{eq}, we consider
\begin{subequations}\label{sys:reg}
\begin{align}
\t \theta 
- \kappa \Delta \theta 
+ \varepsilon \theta^{p}  + \theta \t\varphi  ={}& |\t \varphi|^2 \,,\label{sys:energy} \\
\t \varphi - \Delta \varphi + F'(\varphi ) ={}&\theta \,,\label{sys:phase}
\end{align}
where $\varepsilon\in(0,1)$ is intended to pass to $0$ in the limit and 
$p\in (3,\infty)$ is arbitrary but fixed.
The system is equipped with the boundary conditions
\begin{align*}
 \f n \cdot
  \nabla \varphi = 0 = \f n \cdot \nabla \theta \,.
\end{align*}
\end{subequations}
In order to prove existence of weak solutions according to Definition~\ref{def:weak},
we follow the standard path of first proving existence of solutions to the regularized 
system~\eqref{sys:reg} (see Subsection~\ref{sec:exreg}) (complemented with
regularized initial data), then deriving \textit{a priori} estimates 
(see Subsection~\ref{sec:apri}) uniform with respect to~$\varepsilon$, 
and showing convergence in the end (see Subsection~\ref{sec:conv}).

%%%%%%%%%%%%%%%%%%%%%%%%%%%%%%%%%%%%%%%%%%%%%%%%%%%%%%%%%%%%%%%%%%%%%%%%%%%%%%%%%%%%%%%%%%%%%%%%%

\subsection{Existence of solutions to the regularized system\label{sec:exreg}}

We consider the following regularized decoupled system 
\begin{subequations}
\label{app:sys}
\begin{align}
  \t \theta -\kappa \Delta  \theta + \varepsilon (\theta)^p + \theta \t \varphi ={}& |\t  \varphi|^2 \,,\label{app:heat} \\
\t \varphi - \Delta \varphi + F'(\varphi ) ={}&\ov \theta \,,\label{app:phase}
\end{align}
\end{subequations}
equipped with the boundary conditions~\eqref{boundary} and initial data $ \theta_ 0^\varepsilon \in \C^2(\ov \Omega)$  with $\theta_0^\varepsilon (\f x) \geq c_\varepsilon>0$ 
and $ \varphi_0^\varepsilon \in   \C^2(\ov \Omega )$ for every $\varepsilon>0$ 
such that $\theta_ 0^\varepsilon$ and  $\varphi_0^\varepsilon $ strongly converge to $ \theta_0$ 
and $\varphi_0$ in the spaces indicated in~\eqref{in:teta} and~\eqref{in:chi}, respectively.
Additionally, we assume that $\varepsilon \| \varphi_0\|_{L^p(\Omega)} ^p \leq c$
and that the product $\varphi^\varepsilon _0 \log \theta^\varepsilon_0$ remains bounded in $L^1(\Omega)$.
Here and below, for $r\in\R$ and $\sigma>1$, we use the notation 
$(r)^\sigma=|r|^{\sigma-1}r$. 
This assures that the associated operator 
is monotone. This is necessary in this step, because we have not 
established yet that $\theta $ is positive.

We prove the existence of local solutions by means of the \textsc{Schauder} fixed point argument. 
To do this, we define the set
\begin{align*}
\mathcal{M}_R:= \big \{ \theta \in L^p (\Omega \times (0,T_\varepsilon) )
 \big | \| \theta \|_{L^p(\Omega \times (0,T_\varepsilon))} \leq R  \big \}, 
\end{align*}
where the radius $R>0$, possibly depending on $\varepsilon$, and the final time 
$T_\varepsilon>0$ will be specified in the course of the procedure.

The operator $ \mathcal{T}: \mathcal{ M} _R \ra \mathcal{M}_R $ is defined
by the following procedure. For given $\ov \theta \in \mathcal{M}_R$, 
solve~\eqref{app:phase} with the boundary condition~\eqref{boundary} 
and the initial value~$\varphi_0^\varepsilon  $ so obtaining $\varphi$ 
in a proper regularity class.

Then, for given $\t \varphi $, solve~\eqref{app:heat} with the boundary 
condition~\eqref{boundary} and the initial value~$\theta_0^\varepsilon$ obtaining ${\theta}$. 
Correspondingly, we set $\mathcal{T}(\ov \theta) = {\theta}$. 
Then, to be able to apply the \textsc{Schauder} fixed point theorem, we need to prove several claims.

\smallskip

\textbf{Claim 1: The operator $\mathcal{T}$ is well defined and maps $\mathcal{M}_R $ into itself.}
Since showing the existence of weak solutions to the two equations separately is a standard procedure
(one may use \textsc{Rothe}'s method or a suitable \textsc{Galerkin} discretization like in~\cite[chapter~8]{roubicek}),
we rather focus on the relevant estimates. 

First of all, recalling~\eqref{lambdacon} and testing~\eqref{app:phase} 
by $(\varphi)^{p-1} /p$, we find
\begin{multline*}
\frac{\de }{\de t} \| \varphi\|_{L^p(\Omega)}^p + \frac{4(p-1)}{p^3 } \| \nabla ( \varphi )^{p/2}\|_{L^2(\Omega)}^2 
 +\frac{1}{p} \int_\Omega  G'(\varphi)(\varphi)^{p-1} \de \f x \\ 
  \leq \frac{2\lambda}p \|\varphi\|_{L^p(\Omega)}^p 
  + \frac{1}{p^2} \|  \bar \theta \|_{L^p(\Omega)}^p + \frac{p-1}{p^2} \| \varphi \|_{L^p(\Omega)}^p \,.
\end{multline*}
Due to the monotonicity of $G'$ and to the assumption $G'(0)=0$, the \textsc{Gronwall} inequality shows that 
\begin{equation}
  \| \varphi \|_{L^\infty(0,T_\varepsilon;L^p(\Omega))} 
   \leq   \| \varphi_0^\varepsilon \|_{L^p(\Omega)} + c \| \bar \theta \|_{L^p(0,T_\varepsilon;L^p(\Omega))}
   \leq  \| \varphi_0^\varepsilon \|_{L^p(\Omega)} + c R \,.\label{phiLp}
\end{equation}
Here and below, the constant $c$, whose value may vary on occurrence,
is allowed to depend both on $\varepsilon$ and, {\it in a monotonic way},
on the final time $T_\varepsilon$ to be chosen later. 

Let us now set
\begin{equation}\label{defiGp}
  G_p(s):= \int_0^s (G'(r))^{p-1}  \de r
\end{equation} 
and note that, by construction, $G_p$ is convex and coercive at infinity.
Then, testing~\eqref{app:phase} by $ (G'(\varphi)) ^{p-1}  $, we observe
by \textsc{Young}'s inequality that
\begin{align*}
  \frac{\de }{\de t} \int_\Omega G_p(\fhi) \de x 
  + (p-1) \int_\Omega |\nabla  \varphi|^2 G''(\varphi ) | G'(\varphi )|^{p-2} \de \f x 
  +  \| G'(\varphi ) \|_{L^p(\Omega)}^p \\ 
  \leq \frac{1}{2} \| G'(\varphi ) \|_{L^p(\Omega)}^p 
  + c\left  ( \| \bar \theta \|_{L^p(\Omega)}^p + \| \varphi \|_{L^p(\Omega)}^p\right  ) \,.
\end{align*}
We then obtain
\begin{equation*}
  \| G' (\varphi)\|_{L^p(\Omega\times (0,T_\varepsilon))}
  \leq \int_\Omega G_p(\fhi_0^\varepsilon) \de \f x
   + c \left (\| \bar \theta \|_{L^p(\Omega\times (0,T_\varepsilon))}
     + \| \varphi \|_{L^p(\Omega \times (0,T_\varepsilon))} + 1 \right )\,. 
\end{equation*}
Comparing terms in \eqref{app:phase} and using
the maximal $L^p$-regularity of the operator associated to the heat equation (see~\cite{book}), 
we conclude that there exists a constant $c>0$ such that  
\begin{multline}
   \| \t \varphi \|_{L^p(\Omega \times (0,T_\varepsilon))}
  + \| \Delta \varphi \|_{L^p(\Omega \times (0,T_\varepsilon))} \\
  \leq  c( \| \varphi_0^\varepsilon\|_{\C^0(\ov\Omega)} 
  + \| \varphi_0^\varepsilon\|_{W^{2-2/p,p}(\Omega)} 
   +  \| \bar \theta \|_{L^p(\Omega \times (0,T_\varepsilon))}) \leq c(1+ R)\, . 
\label{esttimeapp}
\end{multline}
In the following, we may consider equation~\eqref{app:heat} for $\t \varphi\in L^p(\Omega \times (0,T))$. 
We define $ q = p^2 /2$ and test equation~\eqref{app:heat} by $\theta^{(q-p)}$ to find
\begin{align*}
 \frac{1}{(q-p+1)} \frac{\de }{\de t } \| \theta \|_{L^{(q-p+1)}(\Omega)} ^{q-p+1} 
  + \frac{4 \kappa (q-p)}{(q-p+1)^2} \| \nabla (\theta ^{(q-p+1)/2}) \|^2_{L^2(\Omega)} 
  +\varepsilon  \| \theta \|_{L^{q}(\Omega)}^{q} \\
  = \int_\Omega |\t \varphi |^2 \theta ^{(q-p)} - \t \varphi  \theta ^{(q-p+1)} \de \f x \\ 
  \leq \frac{\varepsilon}{2} \| \theta \|_{L^{q}(\Omega)}^{q} 
  + c_\varepsilon \left (\| \t \varphi \|_{L^p(\Omega)}^p
  +\| \t \varphi \|_{L^{q/(p-1)}(\Omega)}^{q/(p-1)} \right )\,.
\end{align*}  
Note that $p\geq 2$ such that $  {q/(p-1)}\leq p$. Integrating in time, we conclude that  
\begin{align}
    \| \theta \|_{L^{p^2/2}(\Omega \times (0,T_\varepsilon))}^{p^2/2} 
  \leq  c_\varepsilon \left ( \| \theta_0^\varepsilon \|_{L^{(p^2-2p+2)/2}(\Omega)}^{(p^2-2p+2)/2}    
  + \| \t \varphi \|_{L^p(\Omega\times (0,T_\varepsilon))}^p\right )\,.\label{estimthetapsq}
\end{align}    
The previous estimates and maximal $L^{p/2}$-regularity results 
for the operator associated to the heat equation allow us to infer that 
\begin{align}
  \| \t \theta \|_{L^{p/2}(\Omega\times (0,T_\varepsilon))}
  + \| \Delta  \theta \|_{L^{p/2}(\Omega\times (0,T_\varepsilon))} \leq c \,.\label{maxregtheta}
\end{align}
Using interpolation, it is now not difficult to verify (we leave details to the reader)
that there exists an $s>0$ such that 
\begin{align*}
 \| \theta \|_{W^{s,2p}(0,T_\varepsilon;L^{p}(\Omega))} \leq 
  Q ( \| \varphi_0^\varepsilon\|_{ \C^2(\ov\Omega) } , 
   \| \theta_0^\varepsilon \|_{\C^2(\ov\Omega)}, R)\,,
\end{align*}
where $Q$ is a computable nonnegative-valued and continuous function, which is monotone in $R$.
As the previous constants $c$, also the expression of $Q$ may depend on $T_\varepsilon$. 
However, this dependence is monotonic, hence taking a small final time $T_\varepsilon$
does not alter the expression of $Q$. Further on, we can find a $\beta>0$ such that 
$$
  \| \theta \|_{L^p(\Omega \times (0,T_\varepsilon))} 
   \leq c T_\varepsilon^\beta \| \theta \|_{W^{s,2p}(0,T_\varepsilon;L^{p}(\Omega))}.
$$
Hence, we may conclude that for $R$ big enough, we can take $T_\varepsilon$ so small that
\begin{align*}
  \| \theta \|_{L^p(\Omega \times (0,T_\varepsilon))} 
  \leq{} cT_\varepsilon^\beta \| \theta \|_{W^{s,2p}(0,T_\varepsilon;L^{p}(\Omega))}
   \leq{} T_\varepsilon^\beta Q(  \| \varphi_0^\varepsilon\|_{\C^2(\ov\Omega)} , 
       \| \theta_0^\varepsilon \|_{\C^2(\ov\Omega)} , R) \leq R\,. 
\end{align*}

\smallskip
  
\textbf{Claim 2: The image of  $\mathcal{T}$ is compact in $\mathcal{M}_R$.}
The above estimate~\eqref{maxregtheta} allows us to apply the \textsc{Lions--Aubin} 
compactness lemma (\textit{cf}.~\cite[p.~58]{lions})
granting compactness in $L^{p/2}(\Omega\times (0,T))$. Combining this
with the boundedness~\eqref{estimthetapsq}, we obtain compactness in $L^{p}(\Omega\times (0,T))$. 
Note that we used $p>2$ such that $p^2>2p$. 

\smallskip
   
\textbf{Claim 3: The operator $\mathcal{T}$ is continuous.}
Assume that $\varphi_1$ and $\varphi_2$ are the solutions to~\eqref{app:phase}
corresponding to two different right-hand sides $\ov \theta_1 $ 
and $\ov \theta_2$, respectively. Subtracting the equations from
each other and testing the difference by $\varphi_1 - \varphi_2$, we find
\begin{align*}
 \frac{1}{2}\frac{\de }{\de t} \| \varphi_1 -\varphi_2 \|_{L^2(\Omega)}^2 
  + \| \nabla \varphi_1 - \nabla \varphi_2 \|_{L^2(\Omega)}^2 
  + \int_\Omega  \left ( G'(\varphi _1) -G'(\varphi_2) \right ) ( \varphi_1 - \varphi_2) \de \f x \\
  \leq  \frac{1}{2} \| \ov \theta_1 - \ov \theta _2 \|_{L^2(\Omega)}^2
   + \left ( \frac{1}{2} +2\lambda \right ) \| \varphi _1 - \varphi _2 \|_{L^2(\Omega)}^2 \,,
\end{align*}
which grants the continuity of the solution operator to the equation~\eqref{app:phase} 
from $L^2(\Omega\times (0,T))$ to $ L^\infty(0,T;L^2(\Omega))$. 

This fact, together with the previous estimates and the standard theory of the heat equation, guarantees 
that the solution operator to~\eqref{app:phase} maps $ \ov \theta$ to $\t\varphi$ continuously
from $L^2(\Omega \times (0,T)) $ to $L^2(\Omega \times (0,T))$. Since we already established 
the boundedness in a better space (see~\eqref{esttimeapp}), we even find that the solution operator
to~\eqref{app:phase} maps $ \ov \theta$ to $\t\varphi$ continuously from $L^p(\Omega \times (0,T)) $ 
to $ L^q (\Omega \times (0,T))$ for some $q<p$, for instance for $q = p/(p-2)$. 
  
Assume now that $\theta_1$ and $\theta_2$ are the solutions to~\eqref{app:heat}
corresponding to two different inputs $ \t\varphi_1 $ and $ \t\varphi_2$.
Subtracting the equations from each other and testing the difference by $\theta_1 - \theta_2$, 
we then find
\begin{align}\begin{split}
  \frac{1}{2}\frac{\de }{\de t}&\| \theta _1 - \theta_2 \|_{L^2(\Omega)}^2
   + \kappa \| \nabla \theta_1 - \theta_2 \|_{L^2(\Omega)}^2 
   + \varepsilon\int_\Omega ((\theta _1)^p - (\theta_2)^p )(\theta _1 - \theta_2 ) \de \f x \\
  = {}& \int_\Omega (| \t \varphi _1 |^2 -|\t \varphi_2|^2 
       -\t\varphi_1 \theta_1 + \t \varphi_2 \theta_2 )(\theta _1 - \theta _2 ) \de \f x \\
  \leq{}&  \| \t \varphi _1 - \t \varphi_2 \|_{L^{p/(p-2)}(\Omega)}
   \left ( \| \t \varphi_1\|_{L^p(\Omega)} 
       + \| \t \varphi_2\|_{L^p(\Omega)} \right ) \| \theta _1 - \theta _2 \| _{L^{p}(\Omega)}\\ 
  & + \| \t \varphi _1 \|_{L^p(\Omega)} \| \theta_1 - \theta_2 \|_{L^{2p/(p-1)}(\Omega)}^2 
   + \| \theta_2\|_{L^{p}(\Omega)} \| \t \varphi _1 -\t\varphi_2 \|_{L^{p/(p-2)}(\Omega)} 
     \| \theta_1 - \theta_2 \|_{L^{p}(\Omega)}\,.
\end{split}\label{uniqueest}
\end{align}
Since the function $s \mapsto (s)^p$ is strongly monotone, there exists a $c_p>0$ such that
\begin{align*}
 ((\theta _1)^p - (\theta_2)^p )(\theta _1 - \theta_2 ) 
  \geq c_p | \theta_1 - \theta_2 | ^{p+1}\,.
\end{align*}
This allows us to absorb the terms $\| \theta _1 - \theta _2 \| _{L^{p}(\Omega)} $ 
on the right-hand side of~\eqref{uniqueest} via \textsc{Young}'s inequality 
into the corresponding term on the left-hand side. 
The estimation of the other terms in \eqref{uniqueest} is simpler and hence 
it is left to the reader. 
Note that $p> 3$, so that $2p/(p-1)> p$.
Then, a number of straightforward checks permits us to conclude that the 
solution operator to~\eqref{app:heat} is continuous as a mapping from 
$L^{p/(p-2)}(0,T;L^{p/(p-2)}(\Omega))$ to $ L^\infty(0,T;L^2(\Omega))
\cap L^{p+1}(0,T;L^{p+1}(\Omega)) \cap L^2(0,T;H^1(\Omega))$.
Eventually, we have obtained that $\mathcal{T}$ is a continuous 
mapping from $\mathcal{M}_R$ to $\mathcal{M}_R$, as desired.

\smallskip

\textbf{Claim 4: The solution $\theta$ is positive.} We observe that 
from \eqref{app:heat} there follows
\begin{align*}
  \t \theta -\kappa \Delta \theta 
   & = - \varepsilon (\theta) ^p + | \t \varphi |^2 - \t \varphi \theta\\
   & \geq - \varepsilon (\theta)^p + \frac12 | \t \varphi |^2
  - \frac{1}{2}\theta ^2 
   \geq - (\theta)^p - \frac{1}{2}\theta ^2.
\end{align*}
Then, one may check that, for every $\varepsilon\in(0,1)$,
the solution to the ODE
\begin{align*}
  \t h + (h)^p + \frac{1}{2} h^2 = 0 \,, \quad h(0) 
  := \essinf_{\f x \in \Omega } \theta_0^\varepsilon (\f x)\,,
\end{align*}
is a positive subsolution to~\eqref{app:heat} on
$\Omega \times (0,T)$. Consequently, $\theta (\f x , t ) \geq h(t)>0$ 
for all $ \f x \in \Omega $ and $t\in (0,T)$.  
Moreover, it is worth observing that the above bound is, in the case of the additional assumption~\eqref{tetapos}, independent
of $\varepsilon$.  This fact is not essential here, but it will be in 
the sequel.

\smallskip

As an outcome of the proved Claims 1--4, we may now apply 
\textsc{Schauder}'s fixed point theorem to the map $\mathcal{T}$. 
Hence, for every $\varepsilon>0$ there exists a local-in-time solution 
$(\theta_\varepsilon , \varphi_\varepsilon)$ 
to the regularized system~\eqref{sys:reg}, 
where the local existence interval $(0,T_\varepsilon)$
may actually depend on $\varepsilon$.
\begin{remark}
 We have only established local-in-time existence of solutions to the approximate system. 
 On the other hand, all the \textit{a priori} estimates we shall perform in the next subsection
 will be independent of $T_\varepsilon$. Hence, as a consequence of standard
 extension arguments (details are left to the reader) the weak solution
 we obtain in the limit will be in fact defined on the whole reference interval
 $(0,T)$ given in the beginning.
\end{remark}

%%%%%%%%%%%%%%%%%%%%%%%%%%%%%%%%%%%%%%%%%%%%%%%%%%%%%%%%%%%%%%%%%%%%%%%%%%%%%%%%%%%%%%%%%%%%%%%%%

\subsection{\textit{A priori} estimates\label{sec:apri}}

In order to find \textit{a priori} bounds independent of $\varepsilon$, we derive a number of estimates.
Firstly, testing the equation~\eqref{sys:energy} and~\eqref{sys:phase} with 1 
and $\varphi_t$, respectively, and adding the results, we obtain the energy equality
\begin{multline}
  \frac{1}{2} \| \nabla \varphi_\varepsilon (t) \|_{L^2(\Omega)}^2 
   + \int_\Omega F(\varphi_\varepsilon(t)) + \theta_\varepsilon(t) \de \f x
  + \varepsilon \int_0^t  \int_\Omega \theta_\varepsilon ^p \de \f x \de s \\
  =  \frac{1}{2} \| \nabla \varphi^\varepsilon_0 \|_{L^2(\Omega)}^2 
   + \int_\Omega F(\varphi^\varepsilon_0) + \theta^\varepsilon_0  \de \f x \,,\label{eq:energy}
\end{multline}
where $\theta_0^\varepsilon$ and $\varphi_0^\varepsilon$ are the regularized initial data for
system~\eqref{app:sys}.

Similarly, testing~\eqref{sys:energy} by $-\vartheta/\theta_\varepsilon$, 
we find the entropy production rate
\begin{multline}
  -  \int_\Omega  ( \log \theta_\varepsilon (t) + \varphi_\varepsilon(t) ) \vartheta(t)  \de \f x 
   + \int_0^t \int_\Omega \kappa | \nabla \log \theta_\varepsilon|^2\vartheta 
   + \left | \frac{\t \varphi_\varepsilon}{\sqrt{\theta_\varepsilon}}\right | ^2\vartheta 
   - \varepsilon \theta_\varepsilon ^{p-1} \vartheta  \de \f x \de s \\ 
  = - \int_\Omega ( \log \theta^\varepsilon _0 + \varphi^\varepsilon_0 ) \vartheta(0) \de \f x 
  + \int_0^t  \int_\Omega \kappa \nabla \log \theta_\varepsilon  \cdot \nabla \vartheta 
  -  ( \log \theta_\varepsilon + \varphi_\varepsilon)\t \vartheta\de \f x \de s \,,  \label{eq:entro}
\end{multline}
holding true for every $\vartheta \in C^0([0,T]\times \overline\Omega) 
\cap H^1(0,T; L^2(\Omega)) \cap L^2 (0,T; H^1(\Omega))$ such that
$\vartheta(t, \f x)\ge 0$ for every $t\in[0,T]$ and $\f x \in \overline\Omega$.

Using the boundedness of $F$ from below (see Hypothesis~\ref{hypo}),
we conclude from the energy equality~\eqref{eq:energy} that
\begin{align}
  \|\nabla \varphi_\varepsilon \|_{L^\infty(0,T;L^2(\Omega)) }^2 
  + \| F(\varphi_\varepsilon) \|_{L^\infty(0,T;L^1(\Omega))} 
   + \| \theta _\varepsilon\|_{L^\infty(0,T;L^1(\Omega))} 
   + \varepsilon  \| \theta_\varepsilon \|_{L^p(0,T;L^p(\Omega))}^p  \leq c \,.\label{app:energy}
\end{align}
Then, on account of \eqref{Ecoerc}, we have more precisely
\begin{align}
  \|\varphi_\varepsilon \|_{L^\infty(0,T;H^1(\Omega)) } \leq c \,.\label{app:energy2}
\end{align}
From~\eqref{app:energy} and~Hypothesis~\ref{hypo}, we observe that
\begin{align}
  \| F'(\varphi_\varepsilon) \log (e + | F'(\varphi_\varepsilon) |) \|_{L^\infty(0,T;L^1(\Omega))}
  \leq c \,. \label{app:fderi}
\end{align}
Choosing now $\vartheta=1$ in~\eqref{eq:entro} and using~\eqref{app:energy},
we obtain the bounds
\begin{align}
  \| \log \theta_\varepsilon\|_{L^\infty(0,T;L^1(\Omega))} 
   + \| \nabla \log \theta_\varepsilon\|_{L^2(0,T;L^2(\Omega))} 
   + \left \| \frac{\t \varphi_\varepsilon }{\sqrt{\theta_\varepsilon}} \right \|_{L^2(0,T;L^2(\Omega)) }
 \leq c\,. \label{entroest} 
\end{align}
As observed in the introduction, the above information (which is the direct
outcome of the basic laws of Thermodynamics) is not sufficient to provide 
a control of the temperature uniformly with respect to $\varepsilon$
in a space better than $L^1$ with respect
to the space variables. This is a critical issue and it is the reason why
existence of weak solutions to system \eqref{eq1}-\eqref{eq2} has been proved
so far only in space dimension 1, or in the case when the heat flux law
has a power-like expression \cite{existence}, \textit{i.e.}, the diffusion term in 
\eqref{eq1} has the form $-\kappa \Delta \theta^b$, for $b$ large enough.
In what follows we actually deduce a new \textit{a-priori} estimate 
that will allow us to show that 
\begin{align}
  \int_0^T \int_\Omega \theta_\varepsilon \log \theta_\varepsilon \de \f x \de t \leq c \,.\label{Orlicz}
\end{align}
Such a uniform integrability property, which is 
a somehow unexpected fact because it does not seem to be linked to any 
physical principle, will be the key for showing 
that (a subsequence of) $\theta_\varepsilon$ converges strongly in $L^1$ 
to a limit temperature. In turn, this will imply solvability of the 
limit system in the sense of weak solutions with no
occurrence of defect measures. It is worth noting that, in order
for this argument to work, assuming a polynomial growth of the 
potential \eqref{growthF} is a crucial point. 

That said, we first observe that the solution to 
\begin{align}
  \t \un \varphi - \Delta \un \varphi + F'( \un \varphi ) = \t \un \varphi - \Delta \un \varphi
   + G'( \un \varphi ) - 2 \lambda \un \varphi = 0\quad \text{with }\un\varphi (0) = - K \label{subsol}
\end{align}
is a subsolution to~\eqref{sys:phase}, because we have proved that $ \theta \geq 0$. Note that we have chosen the 
initial value $\un\varphi (0)$ to be constant in $\Omega$, where the constant $K$ is given in~\eqref{in:chi2}. 

Then, assuming for simplicity but without loss of generality that $G'(0) = 0$,
for the solution of~\eqref{subsol}, we observe by testing with $ \un \varphi $ that 
\begin{align*} 
  \frac{1}{2}\left ( \frac{\de }{\de t } | \un \varphi | ^2
    - \Delta |  \un \varphi |^2 \right )
    = - | \nabla \un \varphi|^2 - G'( \un \varphi ) \un \varphi 
   + 2 \lambda |\un  \varphi |^2 \leq  2 \lambda |\un  \varphi |^2 \quad
   \text{with }|\un \varphi(0)|^2 = K^2\,,
\end{align*}
which
permits us to conclude that the solution to the ODE $ \t h = 4\lambda h $ with $h(0) = K^2$ is 
 a supersolution to 
$ | \un \varphi|^2 $  and deduce that $ | \un \varphi (\f x , t) |^2 
\leq  K ^2 e^{4\lambda T}$ 
for all $ (\f x ,t) \in \Omega \times (0,T)$. This in turn gives a lower 
bound on the solution of~\eqref{sys:phase}, \textit{i.e.}, 
there exists a constant $K'>0$ such that
\begin{align}
  \varphi_\varepsilon(\f x, t) \geq - K'
   \quad\text{for all $ (\f x ,t) \in \Omega \times (0,T)$}\,.\label{lowerbound}
\end{align}

Multiplying now~\eqref{sys:energy} by $ 1/\theta_\varepsilon$, we find the entropy production 
rate for the approximate system:
\begin{align*}
  \t \log \theta_\varepsilon 
   + \t \varphi_\varepsilon 
   - \kappa \Delta \log \theta_\varepsilon
   - \kappa | \nabla \log \theta_\varepsilon |^2 
   + \varepsilon \theta _\varepsilon^{p-1} 
  = \frac{| \t \varphi_\varepsilon | ^2 }{\theta_\varepsilon } \,.
\end{align*}
Testing this equation by $\varphi_\varepsilon+ K'$, we observe 
\begin{multline}
  \int_\Omega  (\varphi_\varepsilon+ K') \t \log \theta_\varepsilon \de \f x 
  + \frac{1}{2}\frac{\de }{\de t }\| \varphi_\varepsilon+ K' \|_{L^2(\Omega)}^2 \\ 
  + \int_\Omega \kappa \nabla \log \theta_\varepsilon \cdot \nabla \varphi_\varepsilon 
   - \kappa (\varphi_\varepsilon +K')| \nabla \log \theta_\varepsilon |^2 
   - (\varphi_\varepsilon +K')\frac{| \t \varphi_\varepsilon|^2}{\theta_\varepsilon } 
   + \varepsilon \theta_\varepsilon^{p-1}( \varphi _\varepsilon+K') \de \f x = 0 \,.\label{entrotestphi}
\end{multline}
Now, we test~\eqref{sys:phase} by $\log \theta_\varepsilon$, which implies
\begin{align}
  \int_\Omega\log \theta_\varepsilon \t  \varphi_\varepsilon 
  + \nabla \varphi_\varepsilon \cdot \nabla \log \theta _\varepsilon
   + F'(\varphi_\varepsilon ) \log \theta_\varepsilon \de \f x 
   = \int_\Omega \theta_\varepsilon \log \theta_\varepsilon \de \f x \,.\label{phasetestlog}
\end{align}
Adding~\eqref{entrotestphi} and~\eqref{phasetestlog} and integrating in time, we may observe
\begin{multline}
  \int_0^T\int_\Omega \theta_\varepsilon \log \theta_\varepsilon \de \f x \de t 
  +  \int_0^T \int_\Omega(\varphi _\varepsilon+K') \left (\kappa  | \nabla \log \theta_\varepsilon |^2 
  + \frac{| \t \varphi_\varepsilon|^2}{\theta_\varepsilon } \right )\de \f x \de t \\ 
  =  \frac{1}{2}\| \varphi_\varepsilon+K' \|_{L^2(\Omega)}^2 \Big|_0^T +  \int_\Omega  \log \theta_\varepsilon( \varphi_\varepsilon+K') \de \f x \Big |_0^T  
\\  + \int_0^T\int_\Omega ( \kappa + 1) \nabla \log \theta_\varepsilon \cdot \nabla \varphi _\varepsilon
  + F'(\varphi_\varepsilon ) \log \theta_\varepsilon   
   + \varepsilon \theta_\varepsilon^{p-1} (\varphi_\varepsilon+ K')  \de \f x\de t  \,.\label{together}
\end{multline}
Now, we provide a control of the various terms on the right-hand side of the above
relation. First of all, the lower bound on $\{\varphi_\varepsilon\}$ 
(see~\eqref{lowerbound}) allows us to observe that
\begin{align*}
  \int_0^t \int_\Omega ( \varphi_\varepsilon + K')\left (\kappa| \nabla \log \theta _\varepsilon|^2 
       +  \frac{| \t \varphi_\varepsilon|^2}{\theta_\varepsilon }\right )  \de \f x\de s 
 \geq 0\,.
\end{align*}
Moreover, the term~$\| \varphi_\varepsilon(t)\|_{L^2(\Omega)}^2$ 
in \eqref{together} is controlled due to the $L^\infty(0,T;H^1(\Omega))$-bound 
of $\{\varphi_\varepsilon\}$ established in~\eqref{app:energy2}. 
Next, using H\"older's inequality, we have
\begin{align*}
  \int_0^t\int_\Omega (\kappa + 1) \nabla \varphi_\varepsilon \cdot \nabla \log \theta_\varepsilon \de \f x \de t 
   \leq c \| \nabla \varphi_\varepsilon \|_{L^2(0,T;L^2(\Omega))} 
   \| \nabla \log\theta_\varepsilon \| _{L^2(0,T;L^2(\Omega))} \,,
\end{align*}
where the right-hand side is bounded due to~\eqref{app:energy} and~\eqref{entroest}. 
The regularizing term can be estimated as follows:
\begin{align*}
  \varepsilon \int_0^t \int_\Omega \theta ^{p-1}_\varepsilon \varphi_\varepsilon \de \f x \de t
   \leq \left ( \varepsilon^{1/p} \| \theta _\varepsilon \|_{L^p(0,T;L^p(\Omega))}\right )^{p-1} 
       \varepsilon^{1/p} \| \varphi_\varepsilon \|_{L^p(0,T;L^p(\Omega))} \le c \,,
\end{align*}
where we also used~\eqref{app:energy} and~\eqref{phiLp}.

Considering the second  term on the right-hand side of \eqref{together}, 
we distinguish the cases for the different signs of $\log \theta_\varepsilon$:
\begin{align}\begin{split}
  \int_\Omega  \log \theta_\varepsilon (\varphi_\varepsilon+ K') \de \f x 
   ={}& \int_{\{\theta_\varepsilon \leq 1 \}} \log \theta_\varepsilon (\varphi_\varepsilon+K') \de \f x
   + \int_{\{\theta_\varepsilon > 1 \}}    \log \theta _\varepsilon(\varphi _\varepsilon+K') \de \f x\,,\\ 
\end{split}\label{logphi}
\end{align}
where all terms are evaluated at the time $t$ (the counterparts
at $t=0$ are controlled thanks to the assumptions~\eqref{in:chi} 
and the assumptions on the regularized initial data below formula~\eqref{app:sys}). Thanks to the lower bound 
\eqref{lowerbound} on $\{\varphi_\varepsilon\}$, we find
\begin{align*}
 \int_{\{\theta_\varepsilon \leq 1 \}} \log \theta_\varepsilon (\varphi_\varepsilon+K') \de \f x \leq 0 
% - \int_{\{\theta_\varepsilon \leq 1 \}\cap \{ \varphi _\varepsilon\leq 0\} } 
%     \log \theta_\varepsilon \varphi_\varepsilon \de \f x 
%     \geq K' \int _{\{\theta_\varepsilon \leq 1 \}} 
%      \log \theta_\varepsilon  \de \f x 
%    \ge -c
\,.
\end{align*}
%
%the last inequality following from~\eqref{entroest}.

Additionally, we may estimate the last 
term on the right-hand side of~\eqref{logphi}
using  the \textsc{Legendre--Fenchel--Young} inequality for the convex function $\psi(r) = e^r$ 
and its convex conjugate $ \psi^*(s) = s\log s - s $ (where we intend that $\psi^*(0)=0$ and 
$\psi^*(s)\equiv + \infty$ for $s<0$):
\begin{align*}
   \int_{\{\theta_\varepsilon > 1 \}
%  \cap \{ \varphi_\varepsilon > 0 \}
  }   
    \log \theta_\varepsilon (\varphi_\varepsilon+ K') \de \f x 
   & \leq  \int_{\{\theta_\varepsilon > 1 \}
%   \cap \{ \varphi _\varepsilon> 0 \}
   }  
   \psi(\log \theta_\varepsilon)+ \psi^*( \varphi_\varepsilon+K' ) \de \f x  \\
  & =  \int_{\{\theta _\varepsilon> 1 \}
%  \cap \{ \varphi _\varepsilon> 0 \}
  } 
      \theta_\varepsilon+( \varphi _\varepsilon+K')\log (\varphi_\varepsilon+K') - (\varphi _\varepsilon+K')\de \f x
  \leq c\,,
\end{align*}
the last inequality following from~\eqref{app:energy} and~\eqref{app:energy2}.

We now focus on the term depending on $F'$ 
in \eqref{together}. Actually, by \eqref{lowerbound} and \eqref{Fcoerc},
it is clear that $F'(\fhi_\varepsilon) \ge -c$ a.e.~in~$\Omega\times (0,T)$ and for all $\varepsilon>0$. 
Hence, we can decompose that term as 
\begin{align*}
  \int_0^t\int_\Omega F'(\varphi_\varepsilon ) \log \theta_\varepsilon \de \f x\de s
   & = \iint_{\{\theta _\varepsilon\leq 1 \}}  F'(\varphi_\varepsilon) \log \theta_\varepsilon  \de \f x\de s
   + \iint_{\{\theta_\varepsilon > 1 \}}   F'(\varphi_\varepsilon)  \log \theta_\varepsilon  \de \f x\de s \\
  \leq{}& c \iint_{\{\theta_\varepsilon \leq 1 \}} | \log \theta _\varepsilon |\de \f x\de s 
  + \iint_{\{\theta_\varepsilon > 1 \} }  
    | F'(\varphi_\varepsilon) | \log \theta_\varepsilon  \de \f x\de s \,.
\end{align*}
The first term on the right-hand side is estimated thanks to \eqref{entroest},
whereas the second term can be controlled once more by
using the \textsc{Legendre--Fenchel--Young} inequality with the same $\psi$ as
before. Namely, we have
\begin{align*}
 & \iint_{\{\theta_\varepsilon > 1 \} }  | F'(\varphi_\varepsilon) | \log \theta_\varepsilon  \de \f x\de s 
  \leq  \iint_{\{\theta_\varepsilon > 1 \}}  \psi^*( |F'(\varphi_\varepsilon)| ) + \psi( \log \theta_\varepsilon) \de \f x\de s \\
 & \mbox{}~~~~~ 
  \leq  \iint_{\{\theta_\varepsilon > 1 \}}   |F'(\varphi_\varepsilon)|(\log |F'(\varphi_\varepsilon)| - 1 ) +  \theta_\varepsilon \de \f x\de s  \,,
\end{align*}
and the right-hand side is controlled thanks to the growth assumption \eqref{growthF} 
of Hypothesis~\ref{hypo} and the bound~\eqref{app:energy}. 
Inserting everything back into~\eqref{together}, we eventually obtain
\begin{align*}
  \int_0^t\int_\Omega \theta_\varepsilon \log \theta_\varepsilon \de \f x \de s  \le c \,,
\end{align*}
which proves the estimate~\eqref{Orlicz}.

In order to improve our bound for $\t \fhi_\varepsilon$, we 
consider a new convex function $\psi:\RR\to[1,+\infty]$ defined as 
$\psi(r)=(1/4) (r^2(2\log r -1)+1)$, where it is intended that $\psi(1)=0$ and $\psi(r)\equiv +\infty$ as $r<1$. 
Determining the precise expression of the conjugate function $\psi^*(s)$ is difficult,
but we can at least estimate it appropriately. We recall
that 
\begin{equation*}
  \psi^*(s) = \max_{r\in \RR} \big( sr - \psi (r) \big)
\end{equation*}
and a simple computation shows that the maximum is attained at $s =r\log r$.
Hence, if $r$ is the maximizer, using first that $s =  r \log r$ 
and then that $s+1\le r^2$ (which holds as $r\geq 1$),
we have
\begin{align}\nonumber
  \psi^*(s) & =  r^2 \log r - \psi(r) = \frac{1}{2} r^2 \log r + \frac{1}{4} r^2 - \frac{1}{4} 
  = \frac{s^2}{\log r^2} + \frac{s^2}{\log^2 r^2} - \frac{1}{4}\nonumber  \\
  &\leq \frac{ s^2}{\log (s+1)} + \frac{s^2}{\log^2 (s+1) } - \frac{1}{4}\nonumber\,.
\end{align}
Additionally, we observe for $\psi^*$ that for any $y\in [2,\infty)$ it holds
\begin{align}
 \psi^*( y \log ^{1/2} y ) \leq \frac{y^2 \log  y}{\log\left  (1+y \log ^{1/2} y\right )}
 + \frac{y^2 \log  y}{\log^2\left  (1+y \log ^{1/2} y\right )} - \frac{1}{4} \leq c (y^2 + 1)\,,\label{squarebound}
\end{align}
since the function
\begin{align*}
  y \mapsto \frac{ \log  y}{\log\left  (1+y \log ^{1/2} y\right )}+ \frac{ \log  y}{\log^2\left  (1+y \log ^{1/2} y\right )} 
\end{align*}
is bounded for $y \in [2 ,\infty)$, which is obvious for any compact subset in $[2,\infty)$ 
and also holds as $y\nearrow \infty$ as an easy check shows. 

Now, setting for simplicity $u :=  \sqrt{\theta_\varepsilon}+1$ and  $v:=|\partial_t \fhi_\varepsilon |+2 u $,
we have 
\begin{align}\nonumber
  v \log^{1/2} v 
   & = \frac{v}{u}
     \bigg[ \log \Big( \frac{v}{u} \Big) + \frac12 \log(u^2 ) \bigg]^{1/2}u\\
 \nonumber
   & \le \frac{v}{u}
     \bigg[  \log^{1/2} \Big( \frac{v}{u} \Big) + \frac{1}{\sqrt 2}\log^{1/2}( u^2  ) \bigg] u  \\
 \nonumber
  & \le \frac{v}{u}
       \log^{1/2} \Big( \frac{v}{u} \Big)  u +  \frac{1}{\sqrt 2}\frac{v}{u} u \log^{1/2}( u^2  ) \\
\nonumber
  & \le \psi^*\left (        \frac{v}{u}
       \log^{1/2} \Big( \frac{v}{u} \Big)\right ) + \psi(u) + \frac{1}{\sqrt 8}\left ( \frac{v^2}{u^2} +  u^2 \log (u^2)   \right )    \\
  \nonumber
    &  \leq c \left (  \frac{v^2}{u^2}  +1\right ) + \frac{1}{4} u^2 ( 2 \log (u^2) -1 ) + \frac14 + \frac{1}{\sqrt 8}\left ( \frac{v^2}{u^2} +  u^2 \log (u^2) \right )\\
   &\leq c  \left (      \frac{v^2}{u^2}+u^2 \log (u^2)     +1 \right )
    \,,
 \label{conj14-2}
%  & =: I_1 + I_2.  
\end{align}
where we used calculation rules for the logarithm, properties of the square root under the additional observation that $\log (v/u)\geq 0$,
the \textsc{Legendre--Fenchel--Young} inequality as well as the standard \textsc{Young}'s inequality, and~\eqref{squarebound} as well as the definition of $\psi$. 
Finally, integrating \eqref{conj14-2} over $\Omega\times (0,T)$, 
we observe that the right-hand side is bounded due to \eqref{entroest} and \eqref{Orlicz}.
From the left-hand side, we deduce with the bound~\eqref{Orlicz} that
\begin{align}
  \int_0^T \int_\Omega | \partial_t \fhi_\varepsilon | \log^{1/2} \big( 1 + | \partial_t \fhi_\varepsilon | \big) \de \f x\de t 
   \le c.
   \label{boundphit}
\end{align}
Collecting the above estimates and comparing terms in the equality~\eqref{eq:entro}, we 
also obtain an estimate for the time derivatives of $\log \theta_\varepsilon$, \textit{i.e.},
\begin{align*}
\| \t \log \theta_\varepsilon \|_{L^1(0,T;(W^{1,p}(\Omega))^*)} \leq c \quad \text{for }p>3\,.
\end{align*}

%%%%%%%%%%%%%%%%%%%%%%%%%%%%%%%%%%%%%%%%%%%%%%%%%%%%%%%%%%%%%%%%%%%%%%%%%%%%%%%%%%%%%%%%%%%%%%%%%%%%%%%%%%%%%%%%%%%%%%%%%%%%%%%

\subsection{Convergence of the approximate solutions\label{sec:conv}}

The \textit{a priori} estimates deduced above allow us to infer that there exists a (nonrelabelled) subsequence
of $\varepsilon\searrow 0$ such that the following convergence relations hold:
\begin{align}
 \varphi_\varepsilon&\stackrel{*}{ \rightharpoonup} \varphi \quad \text{in }L^\infty(0,T;H^1(\Omega))\,,\label{weakconvphi}\\
 \log \theta_\varepsilon & \rightharpoonup \ell \quad \text{in } L^2(0,T;H^1(\Omega))\,,\label{weakconvlog}\\
 \t \log \theta_\varepsilon & \stackrel{*}{ \rightharpoonup} \t\ell  \quad \text{in } \mathcal{M}([0,T]; (W^{1,p}(\Omega))^* ) \text{ for }p>3 \,,\label{weakconvlog2}
\end{align}
where \eqref{weakconvlog} comes from combining the estimates of the first two quantities in \eqref{entroest}.

By estimate~\eqref{boundphit}, the family $\{\t \varphi _ \varepsilon\}$ is uniformly integrable. Hence, by
the \textsc{Dunford-Pettis} theorem (\textit{cf}.~\cite[Thms.~21, 22]{dm}) we have
\begin{align}
  \t \varphi _ \varepsilon &\rightharpoonup \t \varphi   \quad \text{in } L^1(0,T;L^1( \Omega)) \label{weak:tphi}\,.
\end{align}
Then, the generalized \textsc{Lions-Aubin} Lemma~(\textit{cf}.~\cite[Cor.~7.9]{roubicek} or~\cite[Thm.~3.19]{BV})  allows us to extract strongly converging subsequences:
\begin{align}
\varphi _\varepsilon & \ra \varphi \quad \text{in }L^2(0,T;L^2(\Omega))\,,\label{aefhi}\\
\log\theta_\varepsilon & \ra \ell \quad \text{in } L^2(0,T;L^2(\Omega))\,.
\end{align}
In particular, up to extracting further subsequences, $\{\fhi_\varepsilon\}$ and 
$\{\log\theta_\varepsilon\}$ converge to $\fhi$ and $\ell$ pointwise a.e.~in $\Omega\times (0,T)$. 
Using estimates~\eqref{app:fderi} and~\eqref{Orlicz}, we obtain that the families
$\{ F'(\varphi_\varepsilon)\}$ and $\{ \theta_\varepsilon\}$ are also uniformly integrable.
Hence, noting that $F'(\varphi_\varepsilon) \to F'(\varphi)$ 
pointwise a.e.~in $\Omega\times (0,T)$ by continuity of $F'$ and that $\theta_\varepsilon \to e^\ell=:\theta$ pointwise a.e.~in $\Omega\times (0,T)$,
and applying \textsc{Vitali}'s theorem (\textit{cf.}~\cite{vitali}), we obtain
\begin{align}
\theta_\varepsilon &\ra  \theta  \quad \text{in } L^1(0,T;L^1( \Omega)) \label{strongtheta} \,,\\
F'(\varphi_\varepsilon ) &\ra F'(\varphi )  \quad \text{in } L^1(0,T;L^1( \Omega )) 
\end{align}
and in particular we identify $\ell = \log\theta$
in \eqref{weakconvlog}-\eqref{weakconvlog2}.

Comparing terms in~\eqref{sys:phase}, we obtain that also $\{ \Delta \varphi_\varepsilon\}$
is uniformly integrable. Hence,
\begin{align}
  \Delta \varphi _\varepsilon & \rightharpoonup \Delta \varphi \quad \text{in } L^1(0,T;L^1( \Omega)) \,.
\end{align}
We can thus pass to the limit in all terms in~\eqref{sys:phase} and attain the equation~\eqref{phaseeq}.

In the next step, we want to pass to the limit in the energy equality~\eqref{eq:energy}.
First, we observe that $\varepsilon \theta^p\geq 0$. Hence, removing that term
we get an inequality at the approximate level.

From~\eqref{weakconvphi} and~\eqref{weak:tphi}
we deduce that 
\begin{align}
  \varphi  _\varepsilon \ra  \varphi \quad \text{in }\C_{\text{w}} ([0,T];H^1(\Omega))\,,\label{Cwconv}
\end{align}
 \textit{i.e.}, 
the sequence is converging pointwise in $[0,T]$ with respect to the weak topology in $L^2(\Omega)$. 
The weakly-lower semi-continuity of the $L^2(\Omega)$-norm 
grants that, for every $t\in[0,T]$,
\begin{align*}
  \| \nabla \varphi(t)\|_{L^2(\Omega)} ^2  \leq \liminf_{\varepsilon >0} \| \nabla \varphi_\varepsilon (t) \|_{L^2(\Omega)} ^2 \,.
\end{align*}
Finally, using pointwise (a.e.) convergence of $\{\log \theta_\varepsilon\}$ and $\{\varphi_\varepsilon\}$ as well as \textsc{Fatou}'s lemma, we find that
\begin{align*}
  \int_\Omega F(\varphi(t)) + \theta(t) \de \f x  \leq \liminf_{\varepsilon \ra 0} \int_\Omega F( \varphi_\varepsilon(t)) + \theta_\varepsilon(t) \de \f x \,\quad\text{a.e.~in }(0,T)\,.
\end{align*}
Therefore, the limit fulfills the energy inequality~\eqref{energyin}.

It remains to pass to the limit in the entropy inequality~\eqref{eq:entro} for $\vartheta\geq 0$. 
To remove the regularizing term, we observe that
\begin{align*}
  \int_0^t\int_\Omega \varepsilon \theta _\varepsilon^{p-1} \vartheta \de \f x \de t 
    \leq \varepsilon^{1/p} \left ( \varepsilon^{1/p} \| \theta _\varepsilon \|_{L^p(\Omega \times (0,T))} \right )^{p-1} \| \vartheta \|_{L^p(\Omega \times (0,T))} \ra 0 
\end{align*}
as $\varepsilon \ra 0$, where again the bound~\eqref{app:energy} is used. 
Additionally, combining estimate \eqref{entroest} with the pointwise convergence 
resulting from~\eqref{strongtheta} and~\eqref{weak:tphi},
we infer
\begin{align*}
\frac{\t \varphi_\varepsilon}{\sqrt{\theta_\varepsilon}} & \rightharpoonup \frac{\t \varphi }{\sqrt\theta}\quad \text{in }L^2(0,T;L^2(\Omega))\,.
\end{align*}
Considering now the equality~\eqref{eq:entro}, we find from the two previous convergences,~\eqref{weakconvlog} and the weak lower 
semi-continuity of convex functions (\textit{cf}.~\cite[Thm.~10.20]{fei} or~\cite{ioffe}) that
\begin{align}
\liminf_{\varepsilon\ra 0}  \int_0^t \int_\Omega \kappa | \nabla \log \theta_\varepsilon|^2\vartheta 
   + \left | \frac{\t \varphi_\varepsilon}{\sqrt{\theta_\varepsilon}}\right | ^2\vartheta 
   - \varepsilon \theta_\varepsilon ^{p-1} \vartheta  \de \f x \de s 
   \geq \int_0^t \int_\Omega \kappa | \nabla \log \theta|^2\vartheta 
   + \left | \frac{\t \varphi}{\sqrt{\theta}}\right | ^2\vartheta 
    \de \f x \de s\,,\label{ent1}
\end{align}
where, from now on, we consider $\vartheta \in C^0([0,T]\times \overline\Omega) 
\cap H^1(0,T; L^2(\Omega)) \cap L^2 (0,T; H^1(\Omega))$ such that
$\vartheta(t, \f x)\ge 0$ for every $t\in[0,T]$ and $\f x \in \overline\Omega$. 
Then, the convergence~\eqref{weakconvlog} allows us to pass to the limit in the second term on the right-hand side of~\eqref{eq:entro}:
\begin{multline}\label{ent2}
\lim_{\varepsilon\ra 0}  \int_0^t  \int_\Omega \kappa \nabla \log \theta_\varepsilon  \cdot \nabla \vartheta 
  -  ( \log \theta_\varepsilon + \varphi_\varepsilon)\t \vartheta\de \f x \de s \\
  = \int_0^t  \int_\Omega \kappa \nabla \log \theta  \cdot \nabla \vartheta 
  -  ( \log \theta + \varphi)\t \vartheta\de \f x \de s \quad\text{for all }t\in[0,T]\,.
\end{multline}
Due to the choice of the approximation of the initial values (below formula~\eqref{app:sys}) together with~\eqref{in:teta} as well as~\eqref{in:chi},
also the first term on the right-hand side of~\eqref{eq:entro} converges appropriately. 

From~\eqref{Cwconv}, we find that 
\begin{align}
\lim_{\varepsilon \ra 0} \int_\Omega \varphi_\varepsilon(t) \vartheta (t) \de\f x =  \int_\Omega \varphi (t)\vartheta(t) \de\f x \quad \text{for all~}t\in (0,T)
\,.\label{ent3}
\end{align}
Finally, we consider the first term in~\eqref{eq:entro}. Decomposing the logarithm in its positive and negative part,
\begin{align*}
\log \theta_\varepsilon = \log_+ \theta_\varepsilon - \log _- \theta_\varepsilon\,,
\end{align*}
we may observe that
the positive part is bounded in $L^p(\Omega)$ for any $p\in(0,\infty)$. 
Indeed, by the estimate $ y^q \leq c e^y $ for any $ q\geq 0$, we find
\begin{align}
  \int_\Omega | \log_+ \theta_\varepsilon(t)|^q \de \f x 
    \leq  \int_\Omega c e^{\log_+\theta_\varepsilon(t)}\de \f x 
    \leq c \int_\Omega \theta_\varepsilon(t)\de \f x \,\label{ent6}
\end{align}
for $q\in(0,\infty)$ and a.e.~$t\in(0,T)$, where the right-hand side is bounded due to~\eqref{app:energy}. 
This implies that the sequence $\{ \log_+\theta _\varepsilon (\f x ,t)\} $ is weakly compact in $L^1(\Omega)$ 
for a.e.~$t\in(0,T)$ (see for instance~\cite[Thm.~1.4.5]{RoubicekMeasure} for different characterizations of 
weak compactness in $L^1$) . The Theorem by \textsc{Vitali} (\textit{cf}.~\cite{vitali}) implies by
the pointwise (a.e.) strong convergence  of $\{ \log \theta_\varepsilon\}$ to $\log \theta$ that
\begin{align}
\log_+\theta_\varepsilon(t)& \ra \log_+\theta(t) \quad  \text{in } L^q(\Omega) \text{ for }q\in[1,\infty) \text{ and a.e.~}t\in(0,T)\,.\label{ent4}
\end{align}
From the a.e.~pointwise convergence of $\{ \log \theta_\varepsilon\}$ to $ \log \theta$ and \textsc{Fatou}'s Lemma, 
we deduce from the positivity of $\log_-$ and of $\vartheta$ that
\begin{align}
  \int_\Omega \log_-\theta (t) \vartheta(t) \de \f x
  \leq \liminf_{\varepsilon\searrow 0} \left (\int_\Omega \log_- \theta_\varepsilon(t) \vartheta(t) \de \f x \right )  \quad\text{a.e.~in }(0,T)\,.\label{ent5}
\end{align}
Finally, going to the limit in the entropy inequality~\eqref{eq:entro} for $\vartheta\geq 0$ 
using~\eqref{ent1}--\eqref{ent5}, we attain relation~\eqref{entropy}. 

Note that the pointwise lower bound on the approximate solutions of \textbf{Claim 4} in Sec.~\ref{sec:exreg} does not depend on $\varepsilon$ in the case of the additional assumption~\eqref{tetapos}. 
The strong convergence~\eqref{strongtheta} preserves this pointwise lower bound, which results in~\eqref{posifo}.
In this case, the sequence $ \{ \log \theta_\varepsilon \}$ is bounded in $L^\infty(0,T;L^q(\Omega))$ due to the lower bound and~\eqref{ent6}. 
This allows us to infer from convergence~\eqref{weakconvlog2} and~\cite[Lemma~2.17]{BV} that 
\begin{align*}
\log \theta_\varepsilon \stackrel{*}{\rightharpoonup} \log \theta \quad \text{in }\text{BV}(0,T;(W^{1,p}(\Omega))^*)\cap L^\infty(0,T;L^q(\Omega)) \text{ for }p>3 \text{ and }q\in (1,\infty)\,,
\end{align*}
which proves regularity \eqref{extra-log}. 
From \cite[Thm.~A.5]{roroweak} with the choices $Y=W^{1,p}(\Omega)$ and $V=L^q(\Omega)$ we also deduce that 
\[
  \log \theta_\varepsilon(t) {\rightharpoonup} \log \theta(t) \quad \text{in } L^q(\Omega)\quad \hbox{for }q\in (1,\infty) \hbox{ and a.e.~}t\in (0,T)\,.
\]
Moreover, using a {generalization} of \textsc{Helly}'s thorem (\textit{cf.}, \textit{e.g.}, \cite[Thm.~3.1]{mm} or \cite[Lemma~7.2]{ddm}) we also obtain 
\[
 \log \theta_\varepsilon(t)\stackrel{*}{ \rightharpoonup}\bar\ell(t) \quad \text{in } (W^{1,p}(\Omega))^*\quad \hbox{for all~}t\in (0,T).
\]
It is easy then to {check} that $\bar\ell$ coincides with $\log\theta$ almost everywhere. Hence, up to changing the representative of $\log\theta$, we may assume that
$\bar\ell=\log\theta$ everywhere on $[0,T]$. 

%

%%%%%%%%%%%%%%%%%%%%%%%%%%%%%%%%%%%%%%%%%
%%%%%%%%%%%%%%%%%%%%%%%%%%%%%%%%%%%%%%%%%
%%%%%%%%%%%%%%%%%%%%%%%%%%%%%%%%%%%%%%%%%
%%%%%%%%%%%%%%%%%%%%%%%%%%%%%%%%%%%%%%%%%

%%%%%%%%%%%%%%%%%%%%%%%%%%%%%%%%%%%%%%%%%
%%%%%%%%%%%%%%%%%%%%%%%%%%%%%%%%%%%%%%%%%

\section{Weak-strong uniqueness}
\label{sec:ws}

In this section we prove Theorem~\ref{thm:main}. In order to do that we first introduce a proper {\em relative energy functional}\ 
$\mathcal E$ (\textit{cf}.~\eqref{relen}), which plays the role of a {\em distance} between a solution $(\theta, \varphi)$ and a generic couple $(\tet, \tp)$. 
Then we estimate in a suitable way the various terms appearing in the expression of $\mathcal E$,
assuming that $(\tet, \tp)$ is a strong solution of our problem, with 
the aim of  finding a suitable {\em relative energy inequality} (\textit{cf}.~\eqref{firstest}). 
With proper manipulations, we will then see that the relative energy inequality
can be interpreted as a differential inequality to which a Gronwall-type argument (\textit{cf}.~\eqref{estimate})
can be applied. As a consequence of that, we will obtain that
any weak  solution necessarily coincides with
a strong solution originating from the same initial data on the existence interval of
the latter.

%%%%%%%%%%%%%%%%%%%%%%%%%%%%%%%%%%%%%%%%%
%%%%%%%%%%%%%%%%%%%%%%%%%%%%%%%%%%%%%%%%%

\subsection{Relative energy inequality}

We use the relative energy  approach (see \textsc{Feireisl}, \textsc{Jin} and \textsc{Novotn{\'y}}~\cite{Feireislrelative}) to prove weak-strong uniqueness. 
In the case of a convex energy functional, this idea goes back to \textsc{Dafermos}~\cite{dafermos} in the context of thermodynamical systems.

We define the relative energy for system~\eqref{eq} as
\begin{subequations}
\label{relen}
\begin{align}
  \mathcal{E}(\theta , \varphi  | \tet,\tp) : ={}&    \frac{1}{2}\| \nabla \varphi - \nabla \tp \|_{L^2(\Omega)}^2  
+ M \| \varphi - \tp \|_{L^1(\Omega)}^2 - \lambda\| \varphi - \tp \|_{L^2(\Omega)}^2  \label{relen0} \\
  & + \int_\Omega  G ( \varphi ) - G ( \tp ) - G'(\tp)( \varphi - \tp )  \de \f x+\int_\Omega  \Lambda ( \theta | \tet)  \de \f x  \label{relen1}     \,,
\end{align}
\end{subequations}
which plays the role of a {\em distance} between a solution $(\theta, \varphi)$ and a generic couple $(\tet, \tp)$. 
In \eqref{relen} we defined for convenience 
\begin{align}
 \Lambda ( \theta | \tet):= \theta - \tet - \tet ( \log \theta - \log \tet)\,.
\label{defLamb}
\end{align}
Moreover, the constant $M>0$ is taken big enough
so that, from the \textsc{Gagliardo-Nirenberg} inequality (\textit{cf}.~\cite[p.~125]{nier})
$ \| \phi\|_{L^2(\Omega)}\leq c (\| \nabla \phi\|_{L^2(\Omega)} ^\alpha \| \phi\|_{L^1(\Omega)}^{(1-\alpha) } + \| \phi\|_{L^1(\Omega)})$ 
for an $\alpha\in(0,1)$ and the \textsc{Young} inequality, line~\eqref{relen0} can be estimated from below by
\begin{equation}\label{82b}
 \mathcal{E}(\theta , \varphi  | \tet,\tp) 
   \geq {} \frac{1}{4}\| \nabla \varphi - \nabla  \tp \|_{L^2(\Omega)}^2 +\| \varphi - \tp\|_{L^1(\Omega)}^2    \,.
\end{equation}
Moreover, due to convexity of $G$, we may conclude that the first term in line~\eqref{relen1} is nonnegative.

We can also observe that, thanks to definition~\eqref{lambdacon},  
\begin{align*}
G ( \varphi ) - G ( \tp ) - G'(\tp)( \varphi - \tp )  = {}& F ( \varphi ) - F ( \tp ) - F'(\tp)( \varphi - \tp )  \\
&+ {\lambda}( | \varphi|^2 - | \tp |^2 - 2\tp ( \varphi - \tp )) \\
={}& F ( \varphi ) - F ( \tp ) - F'(\tp)( \varphi - \tp )  + {\lambda} | \varphi - \tp |^2 \,.
\end{align*}
Finally, we note that also the second term in \eqref{relen1} (\textit{cf.}~\eqref{defLamb}) is nonnegative thanks to convexity
of the exponential. 

Hence, regrouping of some terms in~\eqref{relen} gives
\begin{subequations}\label{calcrelen}
\begin{align}
\mathcal{E}(\theta , \varphi  | \tet,\tp) ={}& \int_{\Omega}  \frac{1}{2}| \nabla \varphi|^2 + F (\varphi) + \theta \de \f x
%\alter{ + \langle \sigma + \mu , 1\rangle  } 
-\int_{\Omega}  \frac{1}{2}| \nabla \tp|^2 + F (\tp) + \tet \de \f  x\notag \\
&+ \int_{\Omega} \nabla \tp \cdot ( \nabla \tp - \nabla \varphi ) + F' ( \tp ) ( \tp - \varphi )\de\f x \label{phaseterms}\\ 
& -  \int_{\Omega} \tet ( \log  \theta + \varphi )  - \tet ( \log  \tet + \tp )  \de \f x\notag  \\ 
&+\int_{\Omega}\tet ( \varphi - \tp )     \de \f x + M\| \varphi - \tp \|_{L^1(\Omega)} ^2\,.  \label{mixedterms}
\end{align}
\end{subequations}
Then, we assume that $(\tet, \tp)$ is a strong solution of our problem with the aim of  finding a suitable {\em relative energy inequality}. 
First, we observe by the energy inequality~\eqref{energyin} for the weak solution and the energy equality (\eqref{energyin} with equality) for the strong solution that
\begin{multline}
 \int_{\Omega} \frac{1}{2}| \nabla \varphi(t)|^2 + F (\varphi(t)) + \theta(t) \de \f x  
%  \alter{+ \langle \sigma + \mu , 1\rangle}  
   -\int_{\Omega}  \frac{1}{2}| \nabla \tp(t)|^2 + F (\tp(t)) + \tet(t) \de \f x  \\
\leq   \int_{\Omega} \frac{1}{2}| \nabla \varphi_0|^2 + F (\varphi_0) + \theta _0\de \f x  -\int_{\Omega} \frac{1}{2}| \nabla \tp_0|^2 + F (\tp_0) + \tet_0 \de \f x \,. \label{energyestimates}
\end{multline}
Note that the energy equality is valid for the strong solution since its regularity suffices 
to test the equations~\eqref{eq1} and ~\eqref{eq2} by $1$ and $\tp_t$, respectively. For brevity, we denote the time derivative by the subscript $t$. 

Next, we choose $\vartheta=\tet$ in~\eqref{entropy}. Note that this is possible on account
of Corollary~\ref{thm:local2} and Remark~\ref{rem:continuous}. 

%\obsolet{Indeed, the space in \eqref{regotetas2} is continuously embedded 
%into $C^0([0,T_s]\times\overline \Omega)$ for any $T_s\in (0,T^*)$,
%as required for test functions in Definition~\ref{def:weak} of weak solutions. }

We then get
\begin{multline}
-  \int_{\Omega} \tet  ( \log  \theta + \varphi) \de \f x  \Big|_0^t + \int_0^t \int_{\Omega} \tet  \left ( \kappa | \nabla\log  \theta|^2+ \frac{| \varphi_t|^2}{\theta }  \right ) \de \f x  \de t\\
  \leq \int_0^t \int_\Omega  \kappa   \nabla \log \theta \cdot \nabla \tet - \tet _t ( \log \theta + \varphi ) \de \f x  \de t \label{testedone}\,.
\end{multline}
Testing now equation~\eqref{eq1} for the strong solution $(\tet,\tp)$ by 1 and observing that
$ \tet_t+ \tp_t\tet= \t (\tet(\log\tet + \tp))-\tet_t ( \log\tet + \tp) $, we find
\begin{align*}
  \int_\Omega \tet ( \log \tet + \tp) \de \f x \Big|_0^t - \int_0^t \int_\Omega {| \tp_t|^2}\de \f x\de t  = \int_0^t \int_\Omega \tet_t ( \log \tet + \tp) \de \f x \de t \,.
\end{align*}
Note that the diffusive term of~\eqref{eq1} vanished due to the homogeneous \textsc{Neumann} boundary conditions (see~\eqref{boundary}).
Similarly, we find by testing equation~\eqref{eq1} for the strong solution $(\tet,\tp)$ by $(\theta - \tet) / \tet$ that 
\begin{align*}
 \int_0^t \int_{\Omega} (\theta - \tet)   \left ( \kappa  | \nabla \log \tet |^2+\kappa  \Delta \log \tet  + \frac{| \tp_t|^2}{\tet }  \right ) \de \f x  \de t
  = \int_0^t \int_\Omega( \theta  -  \tet  ) \t( \log \tet + \tp ) \de \f x  \de t\,.
\end{align*}
Adding the last two equations leads to
\begin{multline}
  \int_\Omega \tet ( \log \tet + \tp) \de \f x \Big|_0^t 
    +  \int_0^t \int_{\Omega} (\theta - \tet)   \left ( \kappa  | \nabla \log \tet |^2
        +\kappa  \Delta \log \tet  + \frac{| \tp_t|^2}{\tet }  \right )  - {| \tp_t|^2} \de \f x  \de t\\ 
  = \int_0^t \int_\Omega \tet_t ( \log \tet + \tp) \de \f x \de t +\int_0^t \int_\Omega( \theta  -  \tet  ) \t( \log \tet + \tp ) \de \f x  \de t \,.
 \label{testedtwo}
\end{multline}
Applying the fundamental theorem of calculus, we may infer for the terms in line~\eqref{phaseterms} that
\begin{align}\label{intphase}
\begin{split}
 \int_{\Omega} &\left ( \nabla \tp \cdot ( \nabla \tp - \nabla \varphi ) + F' ( \tp ) ( \tp - \varphi )\right ) \de \f x  \Big|_0^t\\ 
 = {}&\int_0 ^t \int_\Omega \left (  \nabla \tp _t \cdot (\nabla \tp - \nabla \varphi ) + F'' ( \tp ) \tp_t ( \tp - \varphi)   \right )\de \f x  \de s \\
 {}&
 - \int_0^t \int_\Omega( \tp_t - \varphi   _t )( \Delta  \tp  - F ' (\tp) ) \de \f x \de s 
% \alter{- \int_0 ^t  \left  \langle\tp_t - \varphi   _t ,   \Delta  \tp  - F ' (\tp) \right \rangle     \de s }
\,.
 \end{split}
\end{align}
Note that the procedure leading to equality \eqref{intphase} is formal, but it could be easily
made rigorous by density arguments.

Next, writing~\eqref{eq2} both for the strong solution $(\tet,\tp)$ and for the weak solution 
$(\theta,\fhi)$ (indeed, the expression \eqref{phaseeq} in the definition of weak solutions
is equivalent), taking the difference, and testing it by $\t \tp$, 
we deduce
\begin{multline*}
  \int_0^t \int_\Omega     \nabla \tp_t  \cdot (\nabla \tp- \nabla \varphi ) \de \f x \de s \\
   =  \int_0^t \int_\Omega  (\tet-  \theta) \tp_t      -\tp_t (F' ( \tp) - F' ( \varphi ))  \de \f x \de s 
  - \int_0^t \int_\Omega(  \tp _t - \varphi _t) \tp_t \de \f x  \de s\,.
\end{multline*} 
Similarly, we find by testing equation~\eqref{eq2} for the strong solution~$(\tet,\tp)$ with $ (\tp_t- \varphi_t)$ that
\begin{align*}
%\alter{- \int_0 ^t  \left  \langle\tp_t - \varphi   _t ,   \Delta  \tp  - F ' (\tp) \right \rangle     \de s
- \int_0^t \int_\Omega (   \Delta \tp -  F' ( \tp)) (\tp_t - \varphi_t ) \de \f x \de s  
%=  \int_0 ^t  \left  \langle\tp_t - \varphi   _t ,   \tet - \tp_t \right \rangle     \de s}
%\,.
=   \int_0^t \int_\Omega \tet (\tp_t - \varphi_t)    - \tp _t (\tp _t - \varphi _t)   \de \f x \de s \,.
\end{align*} 
Inserting the latter two relations into~\eqref{intphase}, we may conclude that
\begin{align}
\begin{split}
 \int_{\Omega} &\left ( \nabla \tp \cdot ( \nabla \tp - \nabla \varphi ) + F' ( \tp ) ( \tp - \varphi )\right ) \de \f x  \Big|_0^t\\ 
 = {}&\int_0 ^t \int_\Omega \tp_t  \left (  F'' ( \tp ) ( \tp - \varphi) - F' ( \tp ) + F' ( \varphi)    \right )\de \f x  \de s  \\
 & +\int_0^t \int_\Omega (\tet-  \theta) \tp_t    \de \f x \de s
 - \int_0^t 2  \int_\Omega(  \tp _t - \varphi _t)\tp_t   - (\tp_t - \varphi   _t )   \tet  \de \f x  \de s \,.
\end{split}\label{testedthree}
\end{align}
To handle the first term in line~\eqref{mixedterms}, we apply again the fundamental theorem of calculus, leading to
\begin{align}
\begin{split}
\int_{\Omega}  \tet ( \varphi - \tp )   \de \f x  \Big|_0^t
% + M \left ( \int _ \Omega | \varphi - \tp | \de \f x \right  )^2  \\
={}&\int_0^t \int_{\Omega} \ \tet_t  ( \varphi - \tp )  + ( \varphi_t - \tp_t ) \tet     \de \f x \de s  
%+ 2 M  \int _ \Omega | \varphi - \tp | \de \f x  \t  \int _ \Omega | \varphi - \tp | \de \f x  \de s 
\,.
\end{split}\label{intmix}
\end{align}
Now, we insert~\eqref{energyestimates},~\eqref{testedone},~\eqref{testedtwo},~\eqref{testedthree}, and~\eqref{intmix} 
back into~\eqref{calcrelen}, which leads to the estimate
\begin{align}
\begin{split}
  \mathcal{E}(\theta (t), \varphi (t) | &\tet(t),\tp(t)) + \int_0^t \int_\Omega  \tet \frac{| \varphi_t|^2}{\theta} + ( \theta - \tet ) \frac{| \tp_t|^2 }{\tet	}  - | \tp_t|^2 + 
   2 (\tp_t-\varphi_t ) \tp_t   \de \f x \de s \\
  +\kappa \int_0^t \int_\Omega&   \tet    {| \nabla \log\theta|^2 } -    {\nabla \log\theta }\cdot \nabla \tet + ( \theta - 
  \tet )  {| \nabla\log \tet|^2} +    {\Delta\log \tet } (  \theta -  \tet)  \de \f x  \de s \\
   \leq{}& \mathcal{E}(\theta_0 , \varphi _0 | \tet_0,\tp_0 ) +\int_0 ^t \int_\Omega \tp_t  \left (  F'' ( \tp ) ( \tp - \varphi) - F' ( \tp ) + F' ( \varphi)    \right )\de \f x  \de s \\
& -\int_0^t \int_\Omega      \tet _t ( \log \theta + \varphi )   -  \tet _t  ( \log \tet + \tp )   \de \f x \de s\\
 &+\int_0^t \int_\Omega    ( \theta -  \tet )\partial_t ( \log \tet + \tp )  + (\tet -\theta ) \tp _t  \de \f x \de s\\
& + \int_0^t \int_{\Omega}  (  \tp_t - \varphi _t )\tet  + \tet_t  ( \varphi - \tp )  +  ( \varphi_t - \tp_t ) \tet   \de \f x\de s 
  + M \| \varphi (t)- \tp (t) \|_{L^1(\Omega)}^2 \,.
 \end{split}\label{firstest}
\end{align}
For the last three lines of the forgoing estimate, we observe due to several cancellations that
\begin{align*}
  -\int_\Omega \big (     \tet _t ( \log \theta + \varphi )   -  \tet _t  ( \log \tet + \tp ) 
    -&    ( \theta -  \tet )\partial_t ( \log \tet + \tp )  - (\tet -\theta ) \tp _t  \big )\de \f x \\
  +& \int_\Omega ( \tp_t - \varphi_t )\tet \de \f x 
         +\int_\Omega \tet_t  ( \varphi - \tp ) \de \f x 
         + \int_\Omega (\varphi_t  - \tp_t) \tet \de \f x  \\
  ={}& -\int_\Omega     \tet _t ( \log \theta + \varphi )   -  \tet _t  ( \log \tet + \tp )-\tet_t  ( \varphi - \tp ) \de \f x \\
  &+ \int_\Omega ( \theta -  \tet )\partial_t ( \log \tet + \tp )  + (\tet -\theta ) \tp _t  \de \f x \\
  ={}&   \int_\Omega \t( \log \tet  )( \theta - \tet- \tet   (\log \theta - \log \tet ))\de \f x  \\
  ={}& \int_\Omega (\t \log \tet) \Lambda ( \theta | \tet ) \de \f x  \,.
\end{align*}
Here and below, we are actually using notation~\eqref{defLamb}.

%%%%%%%%%%%%%%%%%%%%%%%%%%%%%%%%%%%%%%%%%%%%%%%%%%%%%%%%%%%%%%%%%%%%%%%%%%%%%%%%%%
%%%%%%%%%%%%%%%%%%%%%%%%%%%%%%%%%%%%%%%%%%%%%%%%%%%%%%%%%%%%%%%%%%%%%%%%%%%%%%%%%%

\subsection{Estimates for the dissipative terms and the nonconvex part}
\label{subsec:diss}

For the dissipative terms due to heat conduction, \textit{i.e.}, 
the terms in the second line of~\eqref{firstest}, we observe with some algebraic transformations that
\begin{align*}
  \tet &   {| \nabla \log\theta|^2 } -    {\nabla \log\theta }\cdot \nabla \tet + ( \theta - 
\tet )  {| \nabla\log \tet|^2} +    {\Delta\log \tet } (  \theta -  \tet) \\
 &=  \left (   \tet  \nabla \log \theta\cdot (  \nabla \log \theta- \nabla \log \tet ) + ( \theta - \tet ) | \nabla \log \tet |^2 + \Delta \log \tet   ( \theta - \tet )    \right )\\
 &= \tet \left ( | \nabla \log \theta - \nabla \log \tet |^2 + \nabla \log \tet \cdot(\nabla \log \theta - \nabla \log \tet ) \right ) \\ 
 &\quad + \left ( \theta - \tet - \tet ( \log \theta - \log \tet ) | \nabla \log \tet |^2  + \tet ( \log \theta - \log \tet ) | \nabla \log \tet|^2 \right )\\
& \quad +  \Delta \log \tet  ( \theta -\tet  - \tet ( \log \theta - \log \tet ))   + \Delta \log \tet    ( \tet ( \log \theta - \log \tet ) ) \,.
\end{align*}
From an integration-by-parts on the last term, using the fact that $ \nabla \log \tet \cdot \f n = ( \nabla \tet \cdot \f n ) / \tet = 0$ 
on the boundary (see~\eqref{boundary}),  and the product rule, we may infer
\begin{multline*}
 \int_\Omega \tet \nabla \log \tet \cdot(\nabla \log \theta - \nabla \log \tet ) + \tet ( \log \theta - \log \tet ) | \nabla \log \tet|^2\de \f x \\- \int_\Omega \nabla \log \tet \cdot  \nabla ( \tet ( \log \theta - \log \tet ) )\de \f x  = 0\,.
\end{multline*}
We may conclude that
\begin{align*}
  \int_\Omega  \tet &   {| \nabla \log\theta|^2 } -    {\nabla \log\theta }\cdot \nabla \tet
   + ( \theta - \tet )  {| \nabla\log \tet|^2} +    {\Delta\log \tet } (  \theta -  \tet) \de \f x    \\
  & = \int_{\Omega}  \tet | \nabla \log \theta - \nabla \log \tet |^2 \de \f x  
   + \int_{\Omega} \Lambda ( \theta | \tet)  ( | \nabla \log \tet|^2 + \Delta  \log \tet ) \de \f x  \,.
\end{align*}
Thanks to the above manipulations, the estimate~\eqref{firstest} may be written as
\begin{align}
\begin{split}
  \mathcal{E}&( \theta(t), \varphi(t)| \tet(t),\tp(t)) + \kappa  \int_0^t  \int_{\Omega}  \tet | \nabla \log \theta - \nabla \log \tet |^2  \de \f x \de s\\
  & + \int_0^t \int_\Omega  \tet \frac{| \varphi_t|^2}{\theta} + ( \theta - \tet ) \frac{| \tp_t|^2 }{\tet	}  - | \tp_t|^2     +2 ( \tp_t-\varphi_t ) \tp_t \de \f x \de s \\
   \leq{}& \mathcal{E}(\theta_0 , \varphi _0 | \tet_0,\tp_0 ) +\int_0 ^t \int_\Omega \tp_t  \left (  F'' ( \tp ) ( \tp - \varphi) - F' ( \tp ) + F' ( \varphi)    \right )\de \f x  \de s  \\
   & + \int_0^t \int_{\Omega}  (\t( \log \tet  ) -\kappa | \nabla \log \tet|^2 - \kappa  \Delta  \log \tet ) \Lambda ( \theta | \tet)   \de \f x  \de s\\
 & + M \| \varphi (t)- \tp (t) \|_{L^1(\Omega)}^2  \,.
\end{split}\label{estim}
\end{align}
For the terms in the second line of the right-hand side of the previous estimate, we find with equation~\eqref{eq1} for the strong solution $(\tet,\tp)$
\begin{align*}
  ( \log \tet  )_t -\kappa | \nabla \log \tet|^2 - \kappa  \Delta  \log \tet = \frac{\tet_t - \kappa \Delta \tet }{\tet}
  = \frac{| \tp_t|^2 - \tp _t \tet }{\tet }= \frac{ | \tp_t|^2 }{\tet} - \tp_t\,.
\end{align*}
Moreover, rearranging the terms in the second line of~\eqref{estim}, we obtain
\begin{align}
\begin{split}
& \int_0^t \int_\Omega   \tet \frac{| \varphi_t|^2}{\theta} + ( \theta - \tet ) \frac{| \tp_t|^2 }{\tet	}   - | \tp_t|^2   
 +2 ( \tp_t-\varphi_t ) \tp_t    \de \f x   
  \de s \\
  &= {} \int_0^t \int_\Omega   \tet \frac{| \varphi_t|^2}{\theta} + \theta  \frac{| \tp_t|^2 }{\tet	}  -2  \varphi_t  \tp_t  \de \f x  \de s \\
  &= {} \int_0^t \int_\Omega   \left |
  \sqrt{\frac{\tet}{\theta}} \varphi_t - \sqrt{\frac{\theta}{\tet}}\tp_t \right |^2     \de \f x  \de s     \,.
\end{split}\label{secdiss}
\end{align}

%%%%%%%%%%%%%%%%%%%%%%%%%%%%%%%%%%%%%%%%%
%%%%%%%%%%%%%%%%%%%%%%%%%%%%%%%%%%%%%%%%%

\begin{proposition}
Let $( \theta ,\varphi)$  be a weak solution according to Definition~\ref{def:weak} and let $(\tet,\tp)$ 
be a strong solution according to Theorem~\ref{thm:local}. 
Then it holds
\begin{align*}
  \| \varphi - \tp\|_{L^1(\Omega)}^2  \Big | _0^t \leq{}& ( 4\lambda+2) \int_0^t  \| \varphi - \tp\|_{L^1(\Omega)}^2 \de s
   +\int_0^t \|\Lambda ( \theta | \tet) \| _{L^1(\Omega) }^2 \de s \\ 
  & +  \int_0^t  \left (\int_\Omega \tet  | \log \theta - \log \tet  | \de \f x \right )^2 \de s \,,
\end{align*}
for all $t\in [0,T]$. 
\end{proposition}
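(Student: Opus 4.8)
The plan is to compare the phase-field equations satisfied by the two solutions, test the difference with a smoothed sign function, absorb the temperature difference into the quantities $\Lambda(\theta\,|\,\tet)$ and $\tet|\log\theta-\log\tet|$, and conclude by Young's inequality.

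First I would subtract the phase equation \eqref{eq2} written for the strong solution $(\tet,\tp)$ from \eqref{phaseeq} written for the weak solution $(\theta,\varphi)$ (recall that this holds pointwise a.e.), obtaining $\t(\varphi-\tp)-\Delta(\varphi-\tp)+F'(\varphi)-F'(\tp)=\theta-\tet$ a.e.\ in $\Omega\times(0,T)$. I would then fix a family $s_\delta\in \C^1(\R)$ of bounded, odd, nondecreasing functions with $s_\delta(0)=0$, $|s_\delta|\le 1$, and $s_\delta(r)\to\segn(r)$ as $\delta\searrow0$, and denote by $S_\delta(r):=\int_0^r s_\delta$ its primitive, so that $0\le S_\delta(r)\le|r|$ and $S_\delta(r)\to|r|$. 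Multiplying the difference equation by $s_\delta(\varphi-\tp)\in L^\infty(\Omega\times(0,T))$ and integrating over $\Omega$ is legitimate since the equation holds pointwise; an integration by parts turns the diffusion term into $\int_\Omega s_\delta'(\varphi-\tp)|\nabla(\varphi-\tp)|^2\de\f x\ge0$, the boundary term vanishing because $\f n\cdot\nabla\varphi=\f n\cdot\nabla\tp=0$ (the normal traces being well defined in $L^1(\partial\Omega)$, see the Remark after Definition~\ref{def:weak}), while $\int_\Omega\t(\varphi-\tp)s_\delta(\varphi-\tp)\de\f x=\frac{\de}{\de t}\int_\Omega S_\delta(\varphi-\tp)\de\f x$ by the chain rule, which is admissible because $\t(\varphi-\tp)\in L^1(0,T;L^1(\Omega))$ by \eqref{rego:fhi} and the regularity of $\tp$.

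For the reaction term I would use the $\lambda$-convexity of $F$: writing $F'=G'-2\lambda\,\mathrm{id}$ with $G'$ monotone and $G'(0)=0$, both $(G'(\varphi)-G'(\tp))s_\delta(\varphi-\tp)$ and $(\varphi-\tp)s_\delta(\varphi-\tp)$ are nonnegative, hence $\int_\Omega(F'(\varphi)-F'(\tp))s_\delta(\varphi-\tp)\de\f x\ge-2\lambda\int_\Omega(\varphi-\tp)s_\delta(\varphi-\tp)\de\f x$; on the right-hand side one simply estimates $\int_\Omega(\theta-\tet)s_\delta(\varphi-\tp)\de\f x\le\int_\Omega|\theta-\tet|\de\f x$. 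Dropping the nonnegative gradient term and letting $\delta\searrow0$ in the time-integrated inequality (dominated convergence, using $S_\delta(r)\to|r|$ and $(\varphi-\tp)s_\delta(\varphi-\tp)\to|\varphi-\tp|$, both dominated by $|\varphi-\tp|\in L^1$, and noting that $\t(\varphi-\tp)=0$ a.e.\ on $\{\varphi=\tp\}$) gives that $t\mapsto\|\varphi(t)-\tp(t)\|_{L^1(\Omega)}$ is absolutely continuous and satisfies, for a.e.\ $t$, the bound $\frac{\de}{\de t}\|\varphi-\tp\|_{L^1(\Omega)}\le 2\lambda\|\varphi-\tp\|_{L^1(\Omega)}+\int_\Omega|\theta-\tet|\de\f x$.

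Finally I would estimate the temperature term: since $\Lambda(\theta\,|\,\tet)=\theta-\tet-\tet(\log\theta-\log\tet)\ge0$ by convexity of the exponential and $\tet>0$, one has $|\theta-\tet|\le\Lambda(\theta\,|\,\tet)+\tet|\log\theta-\log\tet|$, so $\int_\Omega|\theta-\tet|\de\f x\le\|\Lambda(\theta\,|\,\tet)\|_{L^1(\Omega)}+\int_\Omega\tet|\log\theta-\log\tet|\de\f x$. Multiplying the differential inequality by $2\|\varphi-\tp\|_{L^1(\Omega)}$, using $\frac{\de}{\de t}\|\varphi-\tp\|_{L^1(\Omega)}^2=2\|\varphi-\tp\|_{L^1(\Omega)}\frac{\de}{\de t}\|\varphi-\tp\|_{L^1(\Omega)}$, and applying Young's inequality to each of the two products $2\|\varphi-\tp\|_{L^1(\Omega)}\|\Lambda(\theta\,|\,\tet)\|_{L^1(\Omega)}$ and $2\|\varphi-\tp\|_{L^1(\Omega)}\int_\Omega\tet|\log\theta-\log\tet|\de\f x$, one obtains $\frac{\de}{\de t}\|\varphi-\tp\|_{L^1(\Omega)}^2\le(4\lambda+2)\|\varphi-\tp\|_{L^1(\Omega)}^2+\|\Lambda(\theta\,|\,\tet)\|_{L^1(\Omega)}^2+\bigl(\int_\Omega\tet|\log\theta-\log\tet|\de\f x\bigr)^2$, and integrating over $(0,t)$ yields the claimed estimate. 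I expect the only genuinely delicate point to be the rigorous justification of testing the low-regularity phase equation (with $\t\varphi,\Delta\varphi\in L^1$) by the smoothed sign, i.e.\ the integration by parts involving the $L^1$ normal trace and the chain rule in time; the rest is elementary.
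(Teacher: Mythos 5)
Your proposal is correct and follows essentially the same route as the paper: subtract the two phase-field equations, test with (a regularization of) $\segn(\varphi-\tp)$, discard the nonnegative diffusion contribution, use $\lambda$-convexity of $F$ and the splitting $|\theta-\tet|\le\Lambda(\theta\,|\,\tet)+\tet|\log\theta-\log\tet|$, then multiply by $\|\varphi-\tp\|_{L^1}$ and apply Young's inequality before integrating in time. The only cosmetic difference is that you spell out the smoothing $s_\delta\to\segn$ and the passage $\delta\searrow 0$, whereas the paper tests directly with $\segn(\varphi-\tp)$ and remarks that the sign of the diffusion term is justified ``by approximation''; your version makes that approximation explicit.
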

\begin{proof}
The idea of the proof is to use $ \segn( \varphi - \tp)$ as a test function  in equation~\eqref{phaseeq} and subtract
equation~\eqref{eq2} for the strong solution equally tested with $ \segn( \varphi - \tp)$.
Since all terms in~\eqref{phaseeq} are elements of $L^1(\Omega \times (0,T))$, this procedure is allowed.
Setting $ \hat{\varphi} = \varphi - \tp$, we then find
\begin{align*}
 \int_\Omega \t\hat{\varphi}    \segn(\hat{\varphi}  ) - \Delta  \hat{\varphi} \segn(\hat{\varphi} ) + (F'( \varphi ) - F ' ( \tp) ) \segn( \hat{\varphi}  )
\de \f x  =  \int_\Omega ( \theta  - \tet) \segn (\hat{\varphi})  \de \f x\,.
\end{align*}
Moreover, it is clear that
\begin{align*}
 \t\hat{\varphi}    \segn(\hat{\varphi}  ) = \t | \hat{\varphi}| 
\end{align*}
and the $\lambda$-convexity of $F$ guarantees that 
\begin{align*}
  ( F'( \varphi ) - F ' ( \tp) ) \segn (\hat{\varphi} )  
   = (G ' ( \varphi ) - G'(\tp) )(\segn \hat{\varphi}) - 2 \lambda | \hat{\varphi}| 
    \geq - 2\lambda | \hat{\varphi}|  \,.
\end{align*}
Additionally, proceeding by approximation it is not difficult to show that
\begin{align*}
- \int_\Omega  \Delta  \hat{\varphi} \segn(\hat{\varphi} ) \de \f x \geq 0\,,
\end{align*}
where we point out that the boundary conditions~\eqref{boundary} of $\varphi$ are crucial 
for this argument. 

Collecting the above relations, observing additionally that $| \segn(\hat{\varphi})|\leq 1$, we find
\begin{align*}
  \t \int_\Omega | \hat{\varphi} | \de \f x \leq{}& 2\lambda  \int_\Omega | \hat{\varphi} | \de \f x + \int_\Omega | \theta - \tet| \de \f x \\
  \leq{}& 2 \lambda  \int_\Omega | \hat{\varphi} | \de \f x 
   + \int_\Omega \Lambda ( \theta  | \tet )  \de \f x +\int_\Omega \tet  | \log \theta - \log \tet  | \de \f x\,.
\end{align*}
Multiplying the above relation by $ \| \hat{\varphi}\|_{L^1(\Omega)}$
and applying \textsc{Young}'s inequality,  we may conclude that 
 \begin{align*}
\frac{\de}{\de t } \| \hat{\varphi}\|_{L^1(\Omega)} ^2   \leq {}&2 (2\lambda+1)   \| \hat{\varphi} \|_{L^1(\Omega)} ^2 
  +  \left (\int_\Omega  \Lambda ( \theta  | \tet)  \de \f x\right )^2 \\  
  &  + \left (\int_\Omega \tet  | \log \theta - \log \tet  | \de \f x \right )^2
\end{align*}
for a.e.~$t\in(0,T)$.
Integrating in time provides the assertion. 
\end{proof}

\smallskip

\begin{proposition}\label{lem:log:2}
 Let $\theta$ be a weak solution provided by Theorem~\eqref{thm:exweak} and $\tet$ a strong solution 
 according to Theorem~\ref{thm:local}, both originating from the same initial data 
 satisfying\/ \eqref{teta0}-\eqref{chi0}
 and\/ \eqref{teta1} and defined over the same time interval $(0,T)$.
 Then there exists a constant $c>0$ such that 
 \begin{equation}\label{stimalog}
   \| \log \theta - \log \tet\|_{L^1(\Omega)} ^2 \leq c \int_\Omega \Lambda ( \theta | \tet)  \de \f x \,,
 \end{equation}
 where $c$ only depends on the given data of the system.
\end{proposition}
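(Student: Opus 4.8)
The idea is to reduce the inequality to an elementary scalar estimate for the function $\phi(r):=r-1-\log r$, $r>0$, via the substitution $r=\theta/\tet$. Two structural facts will be used. First, the strong solution is bounded between two positive constants, $0<\tet_*\le\tet\le\tet^*$ on $[0,T]$: the lower bound is the analogue of \eqref{posifo} for $\tet$ granted by Theorem~\ref{thm:local}, and the upper bound follows from $\tet\in\C^0([0,T^*);H^2(\Omega))$ (Corollary~\ref{thm:local2}, which applies here since \eqref{teta1} is assumed) together with the Sobolev embedding $H^2(\Omega)\hookrightarrow\C^0(\ov\Omega)$ in dimension at most three. Second, the quantity $\int_\Omega\Lambda(\theta(t)|\tet(t))\,\de\f x$ is bounded uniformly in $t$: indeed $\Lambda(\theta|\tet)=\theta-\tet-\tet\log\theta+\tet\log\tet$, and the weak‑solution regularity \eqref{rego:t1}, \eqref{rego:t2} gives $\theta,\log\theta\in L^\infty(0,T;L^1(\Omega))$, while $\tet$ and $\log\tet$ are bounded, so $\int_\Omega\Lambda(\theta(t)|\tet(t))\,\de\f x\le C_0$ for a.e.~$t$.

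Next I would carry out the scalar estimate. Writing $r=\theta/\tet$ one has $\log\theta-\log\tet=\log r$ and $\Lambda(\theta|\tet)=\tet\,\phi(r)\ge\tet_*\,\phi(r)$, so it suffices to control $\int_\Omega|\log r|\,\de\f x$ by $\int_\Omega\phi(r)\,\de\f x$. Split $\Omega$ into $A=\{\f x:\ r(\f x)\in[1/2,2]\}$ and $B=\Omega\setminus A$. On $A$ the function $r\mapsto(\log r)^2/\phi(r)$ is bounded — it tends to $2$ as $r\to1$ (matching second‑order vanishing of numerator and denominator) and is continuous elsewhere on the compact interval — hence $|\log r|\le C_A\,\phi(r)^{1/2}$ on $A$. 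On $B$ the function $r\mapsto|\log r|/\phi(r)$ is bounded — it tends to $0$ as $r\to\infty$, stays comparable to a constant as $r\to0^+$ (since $\phi(r)=r-1+|\log r|\ge|\log r|-1$ there), and is continuous in between — hence $|\log r|\le C_B\,\phi(r)$ on $B$. Using Cauchy–Schwarz on the $A$‑part (and $|\Omega|<\infty$),
$$
 \int_\Omega|\log r|\,\de\f x\ \le\ C_A|\Omega|^{1/2}\Big(\int_\Omega\phi(r)\,\de\f x\Big)^{1/2}+C_B\int_\Omega\phi(r)\,\de\f x\,,
$$
and since $\int_\Omega|\log r|\,\de\f x=\|\log\theta-\log\tet\|_{L^1(\Omega)}$ and $\phi(r)\le\Lambda(\theta|\tet)/\tet_*$, this yields
$$
 \|\log\theta-\log\tet\|_{L^1(\Omega)}\ \le\ c_1\Big(\int_\Omega\Lambda(\theta|\tet)\,\de\f x\Big)^{1/2}+c_2\int_\Omega\Lambda(\theta|\tet)\,\de\f x\,.
$$

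Finally I would square the last inequality and absorb the quadratic term using the uniform bound from the first paragraph, $\big(\int_\Omega\Lambda\big)^2\le C_0\int_\Omega\Lambda$, obtaining $\|\log\theta-\log\tet\|_{L^1(\Omega)}^2\le c\int_\Omega\Lambda(\theta|\tet)\,\de\f x$ with $c$ depending only on $|\Omega|$, $\tet_*$, $C_0$, i.e.~on the data, as claimed. The one genuinely non‑routine point is the linear term $C_B\int_\Omega\phi(r)$ coming from the region where $\theta$ is small relative to $\tet$: there $|\log\theta|$ is only controlled \emph{linearly} by $\Lambda$ and not by its square root, so squaring necessarily produces $\big(\int_\Omega\Lambda\big)^2$, and the argument hinges on having the a~priori $L^\infty(0,T;L^1(\Omega))$ bounds on $\theta$ and $\log\theta$ available to absorb it; all remaining steps are elementary calculus and Sobolev embeddings.
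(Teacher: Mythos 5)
Your proof is correct, but it takes a genuinely different route from the paper's. The paper's argument is a one-liner: set $\eta=\log\theta$, $\et=\log\tet$ and apply Taylor's theorem with Lagrange remainder to $x\mapsto e^x$, which gives the pointwise identity $\Lambda(\theta\,|\,\tet)=\tfrac12 e^{\xi}(\eta-\et)^2$ with $\xi$ between $\eta$ and $\et$; since the hypothesis \eqref{teta0} guarantees \eqref{posifo} for \emph{both} $\theta$ and $\tet$, one has $e^\xi\ge\delta>0$ pointwise, so $\Lambda\ge\tfrac{\delta}{2}(\log\theta-\log\tet)^2$, and then $\|\log\theta-\log\tet\|_{L^1}^2\le|\Omega|\,\|\log\theta-\log\tet\|_{L^2}^2\le(2|\Omega|/\delta)\int_\Omega\Lambda\,\de\f x$. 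Your proof deliberately avoids the pointwise lower bound on the weak temperature $\theta$: it only exploits $0<\tet_*\le\tet\le\tet^*$, accepts that the pointwise square-root comparison $|\log r|\lesssim\phi(r)^{1/2}$ fails where $\theta\ll\tet$, and replaces it there by the weaker linear comparison $|\log r|\lesssim\phi(r)$; this forces a quadratic term $\big(\int_\Omega\Lambda\big)^2$ to appear after squaring, which you then absorb via an a-priori $L^\infty_t L^1_x$ bound on $\Lambda$ coming from \eqref{rego:t1}--\eqref{rego:t2}. What you lose is brevity and an extra ingredient (the uniform $L^1$-bound on $\Lambda$); what you gain is that the lemma holds without \eqref{posifo} for the weak solution, i.e.\ without assuming $\essinf\theta_0>0$, which could be of independent interest. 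Both arguments yield the same conclusion under the proposition's stated hypotheses.

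One small quibble: your justification ``$\phi(r)=r-1+|\log r|\ge|\log r|-1$ as $r\to0^+$'' is correct but not the cleanest way to see that $|\log r|/\phi(r)$ is bounded on $(0,1/2]$, since $|\log r|-1$ is negative near $r=1/2$; it is simpler to note that the ratio is continuous on $(0,1/2]$ and tends to $1$ as $r\to0^+$, hence bounded. This does not affect the validity of the argument.
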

\begin{proof}
In view of the second condition \eqref{teta0}, \eqref{posifo} is satisfied both by 
$\theta$ and by $\tet$. Setting $\eta = \log \theta$ and $\et = \log\tet$, using
Taylor's expansion, we can directly compute
 \begin{equation}\label{xi}
  \Lambda ( \theta | \tet)  
   = e^{\eta} - e^{\et} - e^{\et} (\eta - \et)
   = \frac12 e^{\xi} (\eta-\et)^2,
 \end{equation}
where the above formula holds at a.e.~point $(\f x,t)\in \Omega\times(0,T)$
and $\xi=\xi(\f x,t)$ lies between $\eta(\f x,t)$ and $\et(\f x,t)$.

In view of \eqref{posifo}, there exists $\delta>0$ depending only on the initial
data and on $T$ such that $\theta(\f x,t)\ge\delta$ and $\tet(\f x,t)\ge\delta$.
As a consequence, we also have $\xi\ge \log\delta$. Thus, rewriting \eqref{xi}
in terms of $\theta$ and $\tet$ and integrating in space, we readily obtain
\eqref{stimalog} (which is stated in term of the $L^1$-rather than
$L^2$-norm just for later convenience).
\end{proof}
%

%%%%%%%%%%%%%%%%%%%%%%%%%%%%%%%%%%%%%%%%%
%%%%%%%%%%%%%%%%%%%%%%%%%%%%%%%%%%%%%%%%%

%%%%%%%%%%%%%%%%%%%%%%%%%%%%%%%%%%%%%%%%%
%%%%%%%%%%%%%%%%%%%%%%%%%%%%%%%%%%%%%%%%%

\subsection{Estimate of the convex modification}

It remains to control the term resulting from the nonlinear potential $F$, 
\textit{i.e.}, the second term on the right-hand side of~\eqref{estim}:
\begin{align*}
  & \int_0^t \int_\Omega \tp_t (F'' ( \tp ) ( \tp - \varphi) - F' ( \tp ) + F' ( \varphi))  \de \f x  \de s\\
  & = \int_0^t \int_\Omega \tp_t (G'' ( \tp ) ( \tp - \varphi) - G' ( \tp ) + G' ( \varphi)) \de \f x  \de s,
\end{align*}
the equality holding as a consequence of \eqref{lambdacon}. We first observe that,
by \eqref{regochis} and standard Sobolev embeddings, 
there exists $\overline{\varphi}>0$ depending only on the data of the problem
such that $| \tp ( x ,t ) | \le \overline{\varphi}$ a.e.~in~$\Omega\times(0,T)$.
For simplicity, we set $\mathcal{R}( \varphi , \tp) : = G'' ( \tp ) ( \tp - \varphi) - G' ( \tp ) + G' ( \varphi)$
and, for $\ov{M}>\overline{\varphi}$ to be chosen below, we also define
$$
  \Omega_-(t):=\big\{x\in\Omega:~|\fhi(t,\f x)|\le \ov M\big\}
$$
and, correspondingly, $\Omega_+(t):=\Omega\setminus\Omega_-(t)$. It is then clear that
for a.e.~$t\in(0,T)$ and $\f x\in \Omega_-(t)$, there holds
\begin{align*}
  \big| \mathcal{R}( \varphi , \tp) \big| = {} & \bigg| G''( \tp  ) ( \tp - \varphi ) - \int_0^1 G''(\tp + (1-s)( \varphi -\tp) ) \de s ( \tp - \varphi ) \bigg|
 \\
 \leq{}&  \int_0^1 |G''( \tp  )-G''(\tp + (1-s)( \varphi -\tp) )| \de s|  \tp - \varphi | 
 \\
 \leq{}&  \int_0^1 c( \| F\|_{\mathcal{C}^{2,1}}, \overline{\varphi},\ov M) | \tp -\tp - (1-s)( \varphi -\tp) | \de s|  \tp - \varphi | 
 \\
 \leq{}& \frac 1 2 c( \| F\|_{\mathcal{C}^{2,1}},  \overline{\varphi},\ov M) | \varphi -\tp | ^2  
 \\
 =: {}& c | \tp - \varphi |^2,
\end{align*}
the last constant $c$ depending on $\ov M$, $\overline{\fhi}$ and the problem data. On the other hand, 
for $\f x\in \Omega_+(t)$, thanks to the growth condition \eqref{growthG}
of Hypothesis~\ref{hypo} we have
\begin{align*}
|\mathcal{R}( \varphi , \tp)| \leq {}& |G '' ( \tp)| | \tp - \varphi | + | G' ( \varphi )|+| G'( \tp) |\\
\leq{}&  c ( 1 + | \varphi- \tp|^2  ) + | G '(\varphi)| \\
\leq{}& c ( 1+|\varphi-\tp|^2 + G(\varphi) ) \,.
\end{align*}
As a consequence of the above argument, we deduce
\begin{align}\label{st81}
  & \int_0^t \int_\Omega | \tp_t | (G'' ( \tp ) ( \tp - \varphi) - G' ( \tp ) + G' ( \varphi)) \de \f x  \de s \\
 \nonumber
  & \le c \int_0^t \| \tp_t \|_{L^\infty(\Omega)} \left( \| \tp - \varphi \|_{L^2(\Omega)}^2
   + \int_{\Omega_+(s)} G(\varphi) \de \f x 
   + | \Omega_+(s) | \right) \de s\,,
\end{align}
where, recalling \eqref{82b} and using the \textsc{Gagliardo--Nirenberg} (\textit{cf}.~\cite[p.~125]{nier}) and \textsc{Young} inequalities,
the difference in the $L^2$-norm can be estimated by
\begin{align*}
  \| \varphi -\tp \|_{L^2(\Omega)}^2
    \leq c \left ( \| \nabla \varphi - \nabla \tp \|_{L^2(\Omega)}^2 + \| \varphi -\tp \|_{L^1(\Omega )}^2 \right ) 
    \leq c \mathcal{E}(\theta,\varphi | \tet ,\tp) \,.
\end{align*}
Let us also notice that, by Hypothesis~\ref{hypo} (see in particular \eqref{Fcoerc}), 
we can choose $\ov M$ so large that $G(r)\ge \lambda r^2/2$ for every $|r|\ge \ov M$. As a consequence,
for $\f x\in \Omega_+(t)$ there holds
\begin{align*}
  \frac12 G ( \varphi) - G( \tp) - G'(\tp)(\varphi-\tp) 
    & \ge \frac\lambda4 \fhi^2 - G( \tp) + G'(\tp) \tp - \frac\lambda{8} \fhi^2 - \frac2\lambda G'(\tp)^2\\
    & \ge \frac\lambda{8} \fhi^2 - c(\overline{\varphi}) \ge \frac\lambda{8} \ov M^2 - c(\overline{\varphi}) \ge K(\ov M,\overline{\varphi},\lambda),
\end{align*}
where the last inequality follows for suitable $K>0$ by possibly taking a larger 
value of $\ov M$ (still in a way that only depends on $\overline{\varphi}$,
hence on the fixed data of the problem). We eventually conclude that
\begin{align*}
 \mathcal{E}( \theta , \varphi | \tet , \tp)(s)
  \geq{} & \int_{\Omega_+(s)} \big( G ( \varphi) -G( \tp) -G'(\tp)(\varphi-\tp) \big) \de \f x \\
  \geq{} & \frac12 \int_{\Omega_+(s)} G(\fhi) \de \f x 
   + K | \Omega_+(s) |\,,
\end{align*}
so that \eqref{st81} gives
\begin{equation}\label{st82}
 \int_0^t \int_\Omega | \tp_t | (G'' ( \tp ) ( \tp - \varphi) - G' ( \tp ) + G' ( \varphi)) \de \f x  \de s
  \le c \int_0^t \| \tp_t \|_{L^\infty(\Omega)} \mathcal{E}( \theta , \varphi | \tet , \tp) \de s\,.
\end{equation}
Putting everything together and going back to relation~\eqref{estim}, we may observe
\begin{align*}
  \mathcal{E}&( \theta(t), \varphi(t)| \tet(t),\tp(t)) + \kappa \int_0^t  \int_{\Omega}  {\tet} | \nabla \log \theta - \nabla \log \tet |^2 + \left | \sqrt{\frac{\tet}{\theta}} \varphi_t - \sqrt{\frac{\theta}{\tet}}\tp_t \right |^2 \de \f x  \de s \\
 % 
%  & +\int_0^t \int_\Omega    \de \f x  \de s  \\
 %
  \leq{}& \mathcal{E}(\theta_0 , \varphi _0 | \tet_0,\tp_0 ) +c \int_0 ^t \| \tp_t\|_{L^\infty(\Omega)}  \mathcal{E}(\theta , \varphi | \tet, \tp ) +\left ( \left  \| \frac{|\tp_t|^2 }{\tet} \right \|_{L^\infty(\Omega)} + \| \tp_t\|_{L^\infty(\Omega)} \right )   \Lambda ( \theta | \tet)   \de s  \\
 % 
%  & + \int_0^t \left ( \left  \| \frac{|\tp_t|^2 }{\tet} \right \|_{L^\infty(\Omega)} + \| \tp_t\|_{L^\infty(\Omega)} \right )   \Lambda ( \theta | \tet) \de s\\
 %  
  & + M \int_0^t (4 \lambda +2) \| \varphi - \tp \|_{L^1(\Omega)} ^2 + \| \Lambda ( \theta | \tet) \|_{L^1(\Omega)}^2 
    + \| \tet \|_{L^\infty(\Omega)} \| \log \theta - \log \tet \|_{L^1(\Omega)}^2 \de s   \,.
\end{align*}
We point out that $ \mathcal{ E}(\theta , \varphi | \tet,\tp) $ is bounded in $L^\infty(0,T)$ by a constant only depending on the given data of the system. 
Moreover, by \eqref{regotetas2}, $\| \tet \|_{L^\infty(\Omega)}$ is controlled uniformly in time. This in turn also holds for 
$ \| \Lambda (\theta | \tet) \|_{L^1(\Omega)}$. 
Hence we may arrive with Proposition~\ref{lem:log:2} at
\begin{align*}
  \mathcal{E}&( \theta(t), \varphi(t)| \tet(t),\tp(t)) + \int_0^t\mathcal{W}( \theta ,\varphi | \tet ,\tp)  \de s   
    \leq \mathcal{E}(\theta_0 , \varphi _0 | \tet_0,\tp_0 ) + c \int_0^t \mathcal{K}(\tet,\tp) \mathcal{E}( \theta ,\varphi | \tet ,\tp) \de s \,,
\end{align*}
where we have defined
\begin{align*}
  \mathcal{W}( \theta ,\varphi | \tet ,\tp) = {}&  \int_{\Omega} \left (\kappa  {\tet} | \nabla \log \theta - \nabla \log \tet |^2 
   + \left | \sqrt{\frac{\tet}{\theta}} \varphi_t - \sqrt{\frac{\theta}{\tet}}\tp_t \right |^2 \right )  \de \f x  
              \intertext{and}
    \mathcal{K} (\tet,\tp) ={}& c\left ( \| \tp_t(s)\|_{L^\infty(\Omega)} + \left \| \frac{|\tp_t(s)|^2}{\tet(s)}\right \|_{L^\infty(\Omega)} +1 \right  )  \,.
\end{align*}
Applying \textsc{Gronwall}’s inequality, we conclude that
\begin{multline}
  \mathcal{E}( \theta(t), \varphi(t)| \tet(t),\tp(t)) +\int_0^t \mathcal{W}( \theta(s) ,\varphi(s) | \tet(s) ,\tp(s)) e^{\int_s^t\mathcal{K}(\tet(\tau),\tp(\tau))\de \tau } \de s  \\ 
  \leq \mathcal{E}(\theta_0 , \varphi _0 | \tet_0,\tp_0 )   e^{\int_0^t\mathcal{K}(\tet(s),\tp(s))\de s } \,.\label{estimate}
\end{multline}
The above estimate concludes the proof of Theorem~\ref{thm:main}.
\begin{remark}
  It is worth noting that the previous estimate~\eqref{estimate} 
  could also be adapted to provide a result on the continuous dependence of 
  the weak solution on the initial datum, holding as long as a strong solution exists. 
\end{remark}

\begin{remark}\label{rem:dissalt}
In the case when the polynomial growth condition \eqref{growthF} fails but
$F$ has at most exponential growth (in such a way that $|F'|$ is 
somehow controlled by $|F|$, so excluding the so-called ``singular'' potentials), 
it may be still possible to prove existence of 
some notion of weak solution. However one expects the occurrence of defect
measures in the phase field equation because the uniform integrability
estimates for $\theta$, $\fhi_t$ and $F'(\fhi)$ are now expected to 
fail. On the other hand, strong positivity \eqref{posifo} 
of $\theta$ is still likely holding under 
the additional assumption \eqref{tetapos}, but it
should be reinterpreted in the sense of measures. Note, however,
that in such a setting, the interpretation of the term $\fhi_t^2/\theta$ 
in the (weak analogue of the) entropy 
inequality~\eqref{entropy} may be troublesome because both $\fhi_t$ and
$\theta$ are now measure-valued objects. Extending weak-strong uniqueness to this weakened 
regularity framework seems also a nontrivial issue; indeed, with the occurrence
of defect measures, several terms we could treat by taking advantage
of $L^1$-regularity may become difficult to be managed.
\end{remark}
%

%%%%%%%%%%%%%%%%%%%%%%%%%%%%%%%%%%%%%%%%%
%%%%%%%%%%%%%%%%%%%%%%%%%%%%%%%%%%%%%%%%%

%%%%%%%%%%%%%%%%%%%%%%%%%%%%%%%%%%%%%%%%%
%%%%%%%%%%%%%%%%%%%%%%%%%%%%%%%%%%%%%%%%%

%%%%%%%%%%%%%%%%%%%%%%%%%%%%%%%%%%%%%%%%%
%%%%%%%%%%%%%%%%%%%%%%%%%%%%%%%%%%%%%%%%%

\section{Local strong solutions}
\label{sec:local}

%%%%%%%%%%%%%%%%%%%%
%%%%%%%%%%%%%%%%%%%%
%%%%%%%%%%%%%%%%%%%%
%%%%%%%%%%%%%%%%%%%%
%\textbf{1.~Step:} Reformulate equation

In what follows, we focus on the proof of Theorem~\ref{thm:local}.

%%%%%%%%%%%%%%%%%%%%%%%%%%%%%%%%%%%%%%%%%%%%%%%%%%%%%%%%%%%%%%%%%%%%%%%%%%%%%%%%%%%%%%%

%\subsection{Local-in-time estimates}
%\label{subsec:loc}

To prove local solvability, we derive some local-in-time estimates.
In order to avoid technical complications, we work directly on system 
\eqref{eq1}-\eqref{eq2}. It is however easy to check that the argument
could be reproduced and made fully rigorous by working on the regularized 
system \eqref{sys:reg}. In the sequel, for notational simplicity, we will
write, for instance, $\| \cdot \|_{L^2}$ in place of 
$\| \cdot \|_{L^2(\Omega)}$. 

Differentiating~\eqref{eq2} in time leads to
\begin{align}
  \varphi_{tt}-\Delta \varphi_t + F''(\varphi) \varphi_t = \theta_t\,. \label{secondorder}
\end{align}
Testing~\eqref{secondorder} by $-\Delta \varphi_t+\varphi_t$ provides
\begin{align}
\begin{split}
 & \frac{\de}{\de t} \frac{1}{2} \|  \varphi_t\|_{H^1}^2 
   + \|  \Delta  \varphi_t\|_{L^2}^2
   + \| \nabla \varphi_t \|_{L^2}^2  
  = \int_{\Omega} F '' ( \varphi ) \varphi_t( \Delta \varphi _t-\varphi_t) \de \f x
   -  \int_{\Omega} \theta_t (\Delta\varphi_t -\varphi_t) \de \f x  \\
  & \mbox{}~~~~~~~~~~ = - \int_{\Omega} F '' ( \varphi )\left  (|\nabla \varphi_t|^2+| \varphi_t|^2\right ) 
  + F'''(\varphi)  \varphi _t\nabla \varphi \cdot  \nabla  \varphi _t \de \f x 
   - \int_{\Omega} \theta_t (\Delta\varphi_t - \varphi_t)\de \f x\\
  & \mbox{}~~~~~~~~~~ \le \lambda \|  \varphi_t\|_{H^1}^2
   + \int_\Omega F'''(\varphi)  \varphi _t\nabla \varphi \cdot  \nabla  \varphi _t \de \f x
   + \frac{9}{16} \| \theta_t\|_{L^2}^2 + \frac{1}{2} \| \Delta \varphi_t\|_{L^2}^2+ 4\| \varphi_t\|_{L^2}^2  \,.
\end{split}\label{localest1}
\end{align}
In the above formula we used the $\lambda$-convexity of $F$ (see Hypothesis~\ref{hypo}) 
together with \textsc{H\"older}'s and \textsc{Young}'s inequalities.

Testing now equation~\eqref{eq1} with $\theta _t$, we obtain
\begin{align}
\begin{split}
\frac{\de }{\de t}\frac{\kappa}{2} \|\nabla \theta \|_{L^2}^2 + \|  \theta_t  \|_{L^2} ^2
  ={}&  \int_\Omega | \varphi_t | ^2 \theta _t - \theta \theta_t \varphi_t \de \f x  
\\
\leq{}&
\| \varphi_t\|_{L^4}^2 \| \theta_t \|_{L^2} + \| \theta_t\|_{L^2} \| \theta \|_{L^4} \| \varphi_t\|_{L^4}
\\
\leq {}& \frac{1}{8}\| \theta_t\|_{L^2}^2 +6 \| \varphi_t\|_{L^4}^4 + 2\| \theta\|_{L^4}^4 
\,,
\end{split}\label{localest2}
\end{align}
where we used again \textsc{H\"older}'s and \textsc{Young}'s inequalities.
With the chain rule, \textsc{H\"older}'s and \textsc{Young}'s inequalities, 
we observe the estimates
\begin{align}
 \begin{split}
   \frac{\de }{\de t}\frac{1}{2}\| \Delta \varphi\|_{L^2}^2 
  & = \int_\Omega \Delta \varphi _t \Delta \varphi \de\f x \leq \frac{1}{4}\| \Delta \varphi_t\|_{L^2} ^2 
    + \| \Delta \varphi \|_{L^2}^2 \,,\\
 %
 % \frac{\de }{\de t}\frac{1}{2}\| \nabla \varphi\|_{L^2}^2 
 % &= \int_\Omega \nabla \varphi _t \nabla \varphi \de\f x \leq \frac{1}{2}\| \nabla \varphi_t\|_{L^2} ^2 
 % + \frac{1}{2}\| \nabla \varphi \|_{L^2}^2 \,,\\
 %
  \frac{\de }{\de t}\frac{1}{2}\|  \varphi\|_{L^2}^2 
   &= \int_\Omega  \varphi _t  \varphi \de\f x \leq \frac{1}{2}\|  \varphi_t\|_{L^2} ^2 
   +\frac{1}{2} \|  \varphi \|_{L^2}^2 \,,\\
  \frac{\de }{\de t}\frac{\kappa}{2}\|  \theta\|_{L^2}^2 
    & = \kappa \int_\Omega  \theta _t  \theta \de\f x \leq \frac{1}{16}\|  \theta_t\|_{L^2} ^2 
   + 4\kappa^2 \|  \theta \|_{L^2}^2\,.
 \end{split}
 \label{localtest3}
\end{align}
Adding~\eqref{localest1},~\eqref{localest2}, and~\eqref{localtest3} and using on $H^2_{\f n}$ 
the norm $\| \cdot \|_{\tilde H^2}^2:= \| \cdot \|_{L^2}^2 + \| \Delta \cdot \|_{L^2}^2$,
which is equivalent to the standard $H^2$-norm as far as functions in  $H^2_{\f n}$  are considered, 
we obtain the inequality
\begin{align}
 \begin{split}
  & \frac{\de}{\de t} \frac{1}{2}\left (  \|  \varphi_t\|_{H^1}^2 +{\kappa} \| \theta \|_{H^1}^2
   + \| \varphi\|_{\tilde H^2}^2\right )  +\frac{1}{4}\left ( \| \Delta  \varphi_t\|_{L^2}^2 
    +\|  \theta_t  \|_{L^2} ^2\right )\\
  & \mbox{}~~~~~ \leq {}c\left ( \|  \varphi_t\|_{H^1}^2+ \| \varphi\|_{\tilde H^2}^2 + \| \theta\|_{L^2}^2\right )
   +\int_\Omega F'''(\varphi)  \varphi _t\nabla \varphi \cdot  \nabla  \varphi _t \de \f x   
   + 6 \| \varphi_t\|_{L^4}^4 + 2\| \theta\|_{L^4}^4 \,.
\end{split} \label{loc:est}
\end{align}
For the term including the nonconvex potential, we observe that
\begin{align*}
 \int_\Omega F'''(\varphi)  \varphi _t\nabla \varphi \cdot  \nabla  \varphi _t \de \f x  
   \leq {}& \| \nabla \varphi _t \|_{L^2} \| \nabla \varphi\|_{L^3}  \| \varphi_t\|_{L^6}\|  F'''(\varphi) \|_{L^\infty}\\
   \leq{}&c \| \varphi_t\|_{H^1}^2\| \nabla \varphi\|_{L^3} 
    \left (\max_{s\in[-\| \varphi\|_{L^\infty},\| \varphi\|_{L^\infty}]}|F'''(s)|\right ) \\
   \leq{}&c  \| \varphi_t\|_{H^1}^2 Q( \| \varphi \| _{\tilde H^2} ).
\end{align*}
Here and below, $Q:[0,\infty)\to[0,\infty)$ denotes a computable,
continuous and increasingly monotone function whose expression may
vary on occurrence. Here, in the specific, we used the condition 
that $F'''$ is continuous (see Hypothesis~\ref{hypo2})
and the continuity of the embedding of $H^2$ into $L^\infty$. 

Defining now $\xi (t) = \frac{1}{2}\big(   \|  \varphi_t\|_{H^1}^2 
 +{\kappa} \| \theta \|_{H^1}^2+ \|  \varphi\|_{\tilde H^2}^2\big) $, 
we find with the continuous embedding $ H^1(\Omega) \hookrightarrow L^4(\Omega)$
that the inequality
\begin{equation}\label{diff:ineq}
  \frac{\de }{\de t} \xi (t) \leq c\left (1+  Q({\xi(t)}) \right ) \,
\end{equation}
holds for $ Q:[0,\infty)\ra [0,\infty)$ with the properties specified above.
Then, a simple application of the comparison principle for ODE's
guarantees the existence of $T^*>0$ and $C_0>0$ such that
\begin{equation}\label{xi:bound}
  \| \xi \|_{L^\infty(0,T^*)}\le C_0. 
\end{equation}
We used here conditions \eqref{teta0}-\eqref{chi0} on the initial data. 
Indeed, it is not difficult to verify that the finiteness of the initial value $\xi|_{t=0}$ corresponds exactly
to the regularity of $\theta_0$, $\fhi_0$ and $\fhi_1=\fhi_t(0)$ specified 
in~\eqref{teta0}-\eqref{chi0}. Then, using \eqref{xi:bound} and subsequently
integrating \eqref{loc:est} over $(0,T^*)$, we deduce the 
properties \eqref{regotetas}-\eqref{regochis},
with the exception of the regularity condition $\theta \in L^2(0,T^*;H^2(\Omega))$.
The latter, however, can be inferred {\it a posteriori} by comparing terms 
in \eqref{eq1} and applying standard elliptic regularity 
results.
Note, finally, that the energy equality \eqref{energyin} is valid for the
strong solution with equality sign because  its regularity suffices 
to test the equations~\eqref{eq1} and ~\eqref{eq2} by $1$ and $\varphi_t$, respectively.
This concludes the proof of Theorem~\ref{thm:local}.

\medskip

\section*{Acknowledgements}
This research was supported by the Italian Ministry of Education, University and Research (MIUR): Dipartimenti di Eccellenza Program (2018--2022) -- Dept. of Mathematics ``F. Casorati'', University of Pavia. In addition, it has been performed in the framework of the project
Fondazione Cariplo-Regione Lombardia MEGASTAR ``Matematica
d'Eccellenza in biologia ed ingegneria come acceleratore di una nuova
strateGia per l'ATtRattivit\`{a} dell'ateneo pavese''. The paper also benefits from the support of the
GNAMPA (Gruppo Nazionale per l'Analisi Matematica, la Probabilit\`{a} e le
loro Applicazioni) of INdAM (Istituto Nazionale di Alta Matematica) for ER and GS.

%%%%%%%%%%%%%%%%%%%%%%%%%%%%%%%%%%%%%%%%%%%%%%%%%%%%%%%%%%%%%%%%%%%%%%%%%%%%%%%%%%%%%%%%%%%%5

%%%%%%%%%%%%%%%%%%%%%%%%%%%%%%%%%%%%%%%%%%%%%%%%%%%%%%%%%%%%%%%%%%%%%%%%%%%%%%%%%%%%%%%%%%%%%%%%%%%%%%%%%%%%%%%%%%%%%%%%%%%%
%%%%%%%%%%%%%%%%%%%%%%%%%%%%%%%%%%%%%%%%%%%%%%%%%%%%%%%%%%%%%%%%%%%%%%%%%%%%%%%%%%%%%%%%%%%%%%%%%%%%%%%%%%%%%%%%%%%%%%%%%%%%

%%%%%%%%%%%%%%%%%%%%%%%%%%%%%%%%%%%%%%%%%%%%%%%%%%%%%%%%%%%%%%%%%%%%%%%%%%%%%%%%%%%%%%%%%%%%%%%%%%%%%%%%%%%%%%%%%%%%%%%%%%%%
%%%%%%%%%%%%%%%%%%%%%%%%%%%%%%%%%%%%%%%%%%%%%%%%%%%%%%%%%%%%%%%%%%%%%%%%%%%%%%%%%%%%%%%%%%%%%%%%%%%%%%%%%%%%%%%%%%%%%%%%%%%%

%%%%%%%%%%%%%%%%%%%%%%%%%%%%%%%%%%%%%%%%%%%%%%%%%%%%%%%%%%%%%%%%%%%%%%%%%%%%%%%%%%%%%%%%%%%%%%%%%%%%%%%%%%%%%%%%%%%%%%%%%%%%
%%%%%%%%%%%%%%%%%%%%%%%%%%%%%%%%%%%%%%%%%%%%%%%%%%%%%%%%%%%%%%%%%%%%%%%%%%%%%%%%%%%%%%%%%%%%%%%%%%%%%%%%%%%%%%%%%%%%%%%%%%%%

%%%%%%%%%%%%%%%%%%%%%%%%%%%%%%%%%%%%%%%%%%%%%%%%%%%%%%%%%%%%%%%%%%%%%%%%%%%%%%%%%%%%%%%%%%%%%%%%%%%%%%%%%%%%%%%%%%%%%%%%%%%%
%%%%%%%%%%%%%%%%%%%%%%%%%%%%%%%%%%%%%%%%%%%%%%%%%%%%%%%%%%%%%%%%%%%%%%%%%%%%%%%%%%%%%%%%%%%%%%%%%%%%%%%%%%%%%%%%%%%%%%%%%%%%

\bibliographystyle{abbrv}

\end{document}